\documentclass[12pt]{amsart}
  
\usepackage{amsfonts, enumitem,amsmath,amsthm,amssymb,mathrsfs}

%%%%%%%%%%%%%%%%%%%%%%%%%%%%%
% Document formatting stuff %
%%%%%%%%%%%%%%%%%%%%%%%%%%%%%
\setlength\oddsidemargin{0in}
\setlength\evensidemargin{0in}
\setlength{\footskip}{.3in}   
\setlength{\textwidth}{16.5cm}
\setlength{\parskip}{5pt}

%%%%%%%%%%%%%%%%%
%Equation numbering%
%%%%%%%%%%%%%%%%%
\numberwithin{equation}{section}

% enumerate enviornment default

%%%%%%%%%%%%%%%%%
% Symbol Macros %
%%%%%%%%%%%%%%%%%

% I always need the binomial symbol!
\providecommand{\binom}[2]{{#1\choose#2}}

% Symbol shortcuts 
\newcommand{\codim}{\operatorname{codim}}
\renewcommand{\geq}{\geqslant}
\renewcommand{\leq}{\leqslant}
\newcommand{\Osh}{{\mathcal O}}                        %  Structure sheaf
\renewcommand{\H}{\mathrm{H}}                          %  Cohomology group
\newcommand{\Sym}{\operatorname{Sym}}
\newcommand{\K}{\mathrm{K}}                            

\newcommand{\Ish}{\mathcal{I}}
\newcommand{\G}{\mathbb{G}}

\newcommand{\GL}{\operatorname{GL}}

\newcommand{\Vol}{\operatorname{Vol}}
\newcommand{\spec}{\operatorname{Spec}}
\newcommand{\ord}{\mathrm{ord}}

\newcommand{\KK}{\mathbf{K}}
\newcommand{\FF}{\mathbf{F}}
\renewcommand{\AA}{\mathbb{A}} % affine space
 % complex numbers
\newcommand{\NN}{\mathbb{N}} % natural numbers
\newcommand{\PP}{\mathbb{P}} % projective space
\newcommand{\QQ}{\mathbb{Q}} % rational numbers
\newcommand{\RR}{\mathbb{R}} % real numbers
\newcommand{\ZZ}{\mathbb{Z}} % integers

%%%%%%%%%%%%%%%%
%Theorem numbering etc %
%%%%%%%%%%%%%%%%%

\newtheorem{theorem}{Theorem}[section]

\newtheorem{corollary}[theorem]{Corollary}
\newtheorem{proposition}[theorem]{Proposition}

\theoremstyle{definition}
\newtheorem{defn}[theorem]{Definition}
\newtheorem{remark}[theorem]{Remark}
\newtheorem{example}[theorem]{Example}

%%%%%%%%%%%%%%%%%%%%%%%%%
%  Start document here  %
%%%%%%%%%%%%%%%%%%%%%%%%%

\begin{document}
\title[Expectations, Chow weights and Roth's theorem]{Expectations, Concave Transforms, Chow weights and Roth's theorem for varieties}

\author{Nathan Grieve}
\address{Department of Mathematics \& Computer Science,
Royal Military College of Canada, P.O. Box 17000,
Station Forces, Kingston, ON, K7K 7B4, Canada
}
\address{School of Mathematics and Statistics, 4302 Herzberg Laboratories, Carleton University, 1125 Colonel By Drive, Ottawa, ON, K1S 5B6, Canada
}
\address{
D\'{e}partement de math\'{e}matiques, Universit\'{e} du Qu\'{e}bec \`a Montr\'{e}al, Local PK-5151, 201 Avenue du Pr\'{e}sident-Kennedy, Montr\'{e}al, QC, H2X 3Y7, Canada}
\email{nathan.m.grieve@gmail.com}%

\begin{abstract} 
We present a new perspective, which is at the intersection of K-stability and Diophantine arithmetic geometry.  It builds on work of Boucksom and Chen \cite{Boucksom:Chen:2011}, Boucksom-et-al \cite{Boucksom:Kuronya:Maclean:Szemberg:2015}, \cite{Boucksom-Hisamoto-Jonsson:2016}, Codogni and  Patakfalvi \cite{Codogni:Patakfalvi:2021}, Fujita \cite{Fujita:2018}, \cite{Fujita:2019}, Li \cite{Li:2017a}, Ferretti \cite{Ferretti:2000}, \cite{Ferretti:2003}, McKinnon and Roth \cite{McKinnon-Roth}, Ru and Vojta \cite{Ru:Vojta:2016}, Ru and Wang \cite{Ru:Wang:2016}, \cite{Ru:Wang:2021}, Heier and Levin \cite{Heier:Levin:2017} and our related works (including \cite{Grieve:Function:Fields}, \cite{Grieve:2018:autissier}, \cite{Grieve:Divisorial:Instab:Vojta}, \cite{Grieve:toric:gcd:2019}, \cite{Grieve:points:bounded:degree} and \cite{Grieve:MVT:2019}).  Our main results may be described via the expectations of the Duistermaat-Heckman measures.  For example, we prove that  the expected orders of vanishing for very ample linear series along subvarieties may be calculated via the theory of Chow weights and Okounkov bodies.  This result provides a novel unified viewpoint to the existing works that surround these topics. It expands on work of Mumford \cite{Mumford:77}, Odaka \cite{Odaka:2013} and Fujita \cite{Fujita:2018}. Moreover, it has applications for Diophantine approximation and K-stability.  As one result of this flavour, we define a concept of uniform arithmetic $\K$-stability for Kawamata log terminal pairs.  We then establish a form of K.~F.~ Roth's celebrated approximation theorem for certain collections of divisorial valuations which fail to be uniformly arithmetically $\K$-stable.  This result builds on the Roth type theorems of McKinnon and Roth \cite{McKinnon-Roth}.  Moreover, it complements the concept of arithmetic canonical boundedness, in the sense of McKinnon and Santriano \cite{McKinnon:Santriano:2021}, in addition to our results that establish instances of Vojta's Main Conjecture for $\QQ$-Fano varieties \cite{Grieve:Divisorial:Instab:Vojta}.
\end{abstract}

\thanks{Mathematics Subject Classification (2020): 14C20, 14G05, 11J87, 11J97, 14L24. \\
I thank the Natural Sciences and Engineering Research Council of Canada for their support through my grants DGECR-2021-00218 and RGPIN-2021-03821. }  

\maketitle

\section{Introduction}\label{intro}

The purpose of this article is to present a new unified viewpoint, which is at the intersection of higher dimensional Diophantine arithmetic geometry, positivity for line bundles on projective varieties and concepts that surround $\K$-stability.  As one example,  we expect that complexity of rational points is measured on rational curves \cite[Conjecture 2.7]{McKinnon:2007}.  Here, as follows from our results, this complexity can be interpreted via the theories of Chow forms and Okounkov bodies.

The Diophantine arithmetic part has been of considerable recent interest (\cite{McKinnon-Roth}, \cite{Ru:Wang:2016}, \cite{Grieve:Function:Fields}, \cite{Ru:Vojta:2016}, \cite{Ru:Vojta:2021}, \cite{Grieve:2018:autissier}, \cite{Grieve:toric:gcd:2019}, \cite{Grieve:Divisorial:Instab:Vojta}, \cite{Grieve:points:bounded:degree}, \cite{Heier:Levin:2017}, \cite{He:Ru:2022}, \cite{Grieve:HN:polygons:Laws:Large:Numbers}).  It has origins in the work of Faltings and W\"{u}stholz \cite{Faltings:Wustholz} and Ferretti \cite{Ferretti:2000}, \cite{Ferretti:2003}.  The formulation of these results in a manner that extends the classical statement of K. F. Roth's theorem is due to McKinnon-Roth \cite{McKinnon-Roth} and Ru-Wang \cite{Ru:Wang:2016}.  That these results could be deduced, as an application of Schmidt's Subspace Theorem, is one theme of \cite{Grieve:Function:Fields} and \cite{Ru:Wang:2016}.  That K-instability for $\QQ$-Fano varieties implies instances of Vojta's Main Conjecture was established in \cite{Grieve:Divisorial:Instab:Vojta}, building on \cite{Ru:Vojta:2016} and \cite{Grieve:2018:autissier}.  Here, we pursue further this circle of ideas (see Theorem \ref{Roth:constant:theorem}).

The more recent convex geometric part, expanding on earlier work of Donaldson \cite{Donaldson:2002}, is made possible by work of Boucksom and Chen \cite{Boucksom:Chen:2011} and Boucksom-et-al \cite{Boucksom:Kuronya:Maclean:Szemberg:2015}, \cite{Boucksom-Hisamoto-Jonsson:2016}.    The manner in which these works are related to the Roth type theorems from \cite{McKinnon-Roth} and the Arithmetic General Theorem from \cite{Ru:Vojta:2016} is discussed in \cite{Grieve:2018:autissier}, \cite{Grieve:toric:gcd:2019} and \cite{Grieve:MVT:2019}.    In particular, \cite[Proposition 6.1]{Grieve:toric:gcd:2019} allows for a more precise form of \cite[Corollary 1.12]{Ru:Vojta:2016}.  Among other results, the calculations with polytopes, as a measure of expected orders of vanishing along torus invariant subvarieties, from \cite{Grieve:toric:gcd:2019}, are special instances of Theorem \ref{volume:constant:expectation:theorem}.  

To formulate our results, let $\overline{\KK}$ be an algebraically closed characteristic zero field.  For the arithmetic applications that we have in mind, including Theorems \ref{Roth:constant:theorem} and \ref{Roth:Constants:Thm}, $\overline{\KK}$ will be a fixed algebraic closure of a number field $\KK$.  

Let $X$ be an irreducible normal projective variety over $\overline{\KK}$ and 
$$Z \subsetneq X$$ 
a proper subscheme.  Fix an ample line bundle $L$ on $X$ and put 
\begin{equation}\label{asym:vol:constant}
\beta_Z(L) := \int_0^{\infty} \frac{\operatorname{Vol}(\pi^* L - t E)}{\operatorname{Vol}(L)} \mathrm{d}t \text{.}
\end{equation}
This is the \emph{expected order of vanishing} of $X$ along $Z$ with respect to $L$.  Here
 $E$ is the exceptional divisor of the blowing-up
 $$\pi \colon X' = \mathrm{Bl}_Z(X) \rightarrow X$$ 
 of $X$ along $Z$.
 
The quantity $\beta_Z(L)$, defined in \eqref{asym:vol:constant}, has origins in a number of different works.  Within the arithmetic context, for example, it is known to have appeared in unpublished work of P. Salberger and was popularized by McKinnon and Roth \cite{McKinnon-Roth}.   Its relation to the main arithmetic invariant of \cite{Ru:Vojta:2016}, building on earlier work of Autissier \cite{Autissier:2011}, was established in \cite{Grieve:2018:autissier}.  Within the context of $\K$-stability for $(X,L)$, there is an interpretation as the expectation of the Duistermaat-Heckman measure \cite{Boucksom-Hisamoto-Jonsson:2016}, \cite{Li:2017a}, \cite{Grieve:MVT:2019}.

A special case of Theorem \ref{volume:constant:expectation:theorem} reads in the following way.

\begin{theorem}\label{volume:constant:expectation:theorem:intro}  Let $L$ be a very ample line bundle, on a normal projective variety $X$, and let 
$$Z \subsetneq X$$ 
be a proper subscheme.  In this context, the expected order of vanishing $\beta_Z(L)$ can be described as a normalized Chow weight
\begin{equation}\label{beta:expectation:3:intro}
\beta_Z(L) = \frac{e_X(\mathbf{c})}{(\dim X + 1) (\operatorname{deg}_L X)}.
\end{equation}
\end{theorem}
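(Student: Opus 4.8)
The plan is to identify both sides of \eqref{beta:expectation:3:intro} with the expectation of a single limit measure attached, via the concave-transform construction of Boucksom--Chen, to the filtration of the section ring of $L$ induced by $Z$. Once this common interpretation is in place, the geometric side produces $\beta_Z(L)$ and the arithmetic side produces the normalized Chow weight, the latter by Ferretti's analysis of the role of Geometric Invariant Theory in this circle of ideas.

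First I would equip the graded ring $R_\bullet = \bigoplus_{m \geq 0} \H^0(X, L^{\otimes m})$ with the multiplicative filtration $\mathcal{F}$ given by order of vanishing along the exceptional divisor $E$: a section $s \in \H^0(X, L^{\otimes m})$ belongs to $\mathcal{F}^t R_m$ precisely when $\pi^* s$ vanishes along $E$ to order at least $t$. This filtration has associated jumping numbers on each $R_m$; rescaling by $m$ and letting $m \to \infty$, the resulting discrete measures converge to a limit measure $\nu$ on $[0,\infty)$. By the theory of concave transforms, $\nu$ is the pushforward of normalized Lebesgue measure on the Okounkov body $\Delta$ of $(X,L)$ under the concave transform $G_{\mathcal{F}} \colon \Delta \to \RR$, and its tail is governed by the volume function, in the sense that $\nu\big([t,\infty)\big) = \Vol(\pi^* L - tE)/\Vol(L)$; here the equality reflects the fact that sections of $L^{\otimes m}$ vanishing to order at least $mt$ along $E$ are counted, to leading order in $m$, by $\Vol(\pi^*L - tE)$.

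Next I would compute the first moment $\int t \, \mathrm{d}\nu$ in two ways. On the geometric side, because the tail equals $\Vol(\pi^* L - tE)/\Vol(L)$, the standard identity for the expectation of a nonnegative measure gives $\int t\,\mathrm{d}\nu = \int_0^\infty \Vol(\pi^* L - tE)/\Vol(L)\,\mathrm{d}t = \beta_Z(L)$. On the arithmetic side, the rescaled jumping numbers are precisely the weight data entering the Chow form of $X$ under the weighting $\mathbf{c}$ determined by $Z$; Ferretti's formula then identifies the limit of the discrete first moments with $e_X(\mathbf{c})\big/\big((\dim X + 1)(\deg_L X)\big)$. Equating the two evaluations of the single expectation $\int t\,\mathrm{d}\nu$ yields \eqref{beta:expectation:3:intro}.

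The principal difficulty lies in the arithmetic identification. One must match the geometric filtration by order along $E$ with the combinatorial weight vector $\mathbf{c}$ entering the Chow form, and verify that the normalizing factors $(\dim X + 1)$ and $\deg_L X$ arise correctly---the former from the multidegree of the Chow form across its $(\dim X + 1)$ blocks of variables, the latter from the degree of $X$ under the embedding defined by the very ample $L$. Establishing that Ferretti's discrete expectations converge to exactly this normalized weight, in harmony with the concave-transform limit furnished by Boucksom--Chen, is the crux of the argument; the geometric evaluation of $\beta_Z(L)$, by contrast, is a routine integration-by-parts once the tail formula for $\nu$ is in hand.
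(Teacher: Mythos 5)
Your proposal is correct and follows essentially the same route as the paper: both compute the expectation of the limit measure attached to the exceptional-divisor filtration in two ways, identifying it geometrically with $\beta_Z(L)$ via the tail formula $\nu([t,\infty)) = \Vol(\pi^*L - tE)/\Vol(L)$ (the paper's Proposition \ref{Concave:transform:theorem1} and Theorem \ref{volume:constant:expectation:theorem}), and arithmetically with the normalized Chow weight via the asymptotics of the weighted sums $s(m,\mathbf{c})$ of vanishing numbers (the paper's Proposition \ref{vanishing:number:chow:weights:prop} and Corollary \ref{chow:weight:cor1}, following Ferretti). You have also correctly located the crux in the arithmetic identification of the filtration data with the Chow weight vector.
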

 
In \eqref{beta:expectation:3:intro},  as in \cite[Corollary 3.2]{Grieve:2018:autissier}, $e_X(\mathbf{c})$ is the \emph{Chow weight} of $X$ with respect to a linearly normal embedding 
$$X \hookrightarrow \PP^n_{\overline{\KK}}$$ 
that is induced by an \emph{inflectionary basis}; i.e., a basis for $\H^0(X,L)$, which is compatible with the filtration that is given by orders of vanishing along $E$.  Here, the \emph{weight vector} 
$$\mathbf{c} = (a_0(L),\dots,a_n(L))$$ 
is determined by the vanishing numbers of $L$ with respect to the filtration that is induced by the exceptional divisor $E$.  It is natural to refer to such embeddings as \emph{inflectionary embeddings}.  We refer to Section \ref{discrete:measures:chow} for more details.

Theorem \ref{volume:constant:expectation:theorem:intro} is a consequence of Theorem \ref{Linearly:normal:Chow:Weights:Thm} and Theorem \ref{volume:constant:expectation:theorem}.  Theorem \ref{Linearly:normal:Chow:Weights:Thm} is of an independent interest and refines work of Mumford, including \cite[Section 2]{Mumford:77}, Fujita \cite[Section 4]{Fujita:2018} and Boucksom-et-al \cite[Section 2.6]{Boucksom-Hisamoto-Jonsson:2016}.

On the other hand, Theorem \ref{volume:constant:expectation:theorem} contains a number of equivalent descriptions of $\beta_Z(L)$.  It shows that it can be expressed as an expectation which is determined by the concave transform of the Okounkov body of $L$.  

As in \cite[Proposition 3.1]{Grieve:2018:autissier}, this aesthetically pleasing result establishes its equivalence with the main arithmetic invariant of \cite{Ru:Vojta:2016}, which builds on \cite[Definition 2.4]{Autissier:2011}.  Moreover, Theorems \ref{volume:constant:expectation:theorem:intro} and \ref{volume:constant:expectation:theorem} allow for a conceptual interpretation of the main invariants of K-stability, including those that have origins in the work of Donaldson \cite{Donaldson:2002}.  A slight extension to the works \cite{Boucksom:Chen:2011} and \cite{Boucksom:Kuronya:Maclean:Szemberg:2015}, which is exposed in \cite{Grieve:MVT:2019}, is one aspect to our proof of these results.

We also mention that the quantity $\beta_Z(L)$ is related to the \emph{Seshadri constant}
$$
\epsilon(L;Z) := \sup \{t \in \RR_{\geq 0} : \pi^*L - tE \text{ is nef} \} 
$$
\cite[page 206]{Ross:Thomas:2007}.
For instance, when $X$ is nonsingular, it follows from \cite[Theorem 4.2]{Heier:Levin:2017}, building on the well-known inequality \cite[Corollary 4.2]{McKinnon-Roth}, combined with the equivalence of these definitions as was established in \cite{Grieve:2018:autissier}, that 
\begin{equation}\label{epsilon:beta:eqn}
\beta_Z(L) \geq \frac{\codim_X(Z)}{\dim X + 1} \epsilon(L;Z) \text{.}
\end{equation}
In Example \ref{codim:seshadri:lower:bound}, we expand upon the approach of \cite[Section 4]{Zhu:2020} to obtain a similar estimate, for the case that $Z$ is a complete intersection in the complete linear system $|L|$.  This estimate involves the incomplete beta functions. 

When $X$ is a $\QQ$-Fano variety and $L = \K_X^{\otimes -1}$, the inequality \eqref{epsilon:beta:eqn} can be used to give an upper bound for the Donaldson-Futaki invariant of the basic test configurations determined by dreamy subvarieties of $X$.  This is explained in Section \ref{concave:transform:examples} (see Examples \ref{DF:Example} and \ref{codim:seshadri:lower:bound}).

In \cite{Grieve:Divisorial:Instab:Vojta}, it is observed that the valuative criteria for $\K$-instability of $\QQ$-Fano varieties has arithmetic consequences for Vojta's Main Conjecture.  Recall, that such valuative criteria, for $\K$-stability of $\QQ$-Fano varieties (see \cite{Fujita:2019}) involve the quantities $\beta_Z(L)$ for the case that $L = \K_X^{\otimes -1}$.  An extension to this circle of ideas has recently been given in \cite{Dervan:Legendre:2022}.

The starting point for the arithmetic observations of \cite{Grieve:Divisorial:Instab:Vojta}, include \cite[Theorem 10.1]{McKinnon-Roth}, \cite[Corollary 1.3]{Grieve:2018:autissier} and \cite[Corollary 1.12]{Ru:Vojta:2016}.  Within the context of toric varieties, more detailed calculations that illustrate calculation of the quantity $\beta_Z(L)$, via Okounkov bodies, may be found in  \cite{Grieve:toric:gcd:2019}.  They should be seen as special cases of Theorem \ref{volume:constant:expectation:theorem}, which we establish here.  

As some additional results, which are of an independent interest, in Section \ref{very:ample:test:config}, we explore the manner in which weight vectors pertain to very ample test configurations for linearly normal embedded projective varieties.  These results (see Theorems \ref{weight:vectors:very:ample:test:configs} and  \ref{projective:normal:test:configs:Rees:algebras}), in addition to illustrating the results from Section \ref{discrete:measures:chow}, provide refinements, for the case of linearly normal embeddings, to the well known result of Ross and Thomas \cite[Proposition 3.7]{Ross:Thomas:2007}.

Finally, we obtain arithmetic consequences of our results.
For example, in the direction of Vojta's Main Conjecture and its relation to $\K$-instability, for polarized projective varieties, we establish Theorem \ref{Roth:constant:theorem}.  It is a form of Roth's celebrated approximation theorem and complements \cite[Theorem 10.1]{McKinnon-Roth} and \cite[Theorem 1.1 and Corollary 1.2]{Grieve:Divisorial:Instab:Vojta}.

In order to formulate Theorem \ref{Roth:constant:theorem}, consider the case of a Kawamata log terminal pair $(X,\Delta)$ defined over a number field $\KK$.  Let $L$ be an ample line bundle on $X$.  The idea is to consider a form of \emph{arithmetic uniform $\K$-stability}.  In more precise terms, fix a finite set of places $S$ and for each $v \in S$, let $E_v$ be a prime divisor over $X$, with field of definition some finite extension $\FF$ of $\KK$ with the property that $\KK \subseteq \FF \subseteq \overline{\KK}$.  Here, $\overline{\KK}$ is a fixed algebraic closure of $\KK$.  We  refer to Sections \ref{real:valuations:discrete:measures} and \ref{approx:constants} for more details in regards to our conventions about divisorial valuations and prime divisors over $X$.

Motivated by the concept of \emph{uniform stability} for Kawamata log terminal Fano pairs, see \cite[Definition 4.8]{Codogni:Patakfalvi:2021}, here, we say that $(X,\Delta)$ is \emph{not arithmetically $\K$-stable} with respect to $L$ and $E_v$, for $v \in S$, if
\begin{equation}\label{arithmetic:K:stable:inequality}
1 < \sum_{v \in S} A(E_v,X,\Delta) < \sum_{v \in S} \beta_{E_v}(L)R_v
\end{equation}
for some positive constants $R_v$.  In \eqref{arithmetic:K:stable:inequality}, we say that such positive constants $R_v$, for $v \in S$, are \emph{arithmetically $\K$-destabilizing Roth constants}.  Moreover, $A(E_v,X,\Delta)$ denotes the \emph{log discrepancy} of $E_v$ with respect to $(X,\Delta)$.

\begin{theorem}\label{Roth:constant:theorem}
Let $(X,\Delta)$ be a Kawamata log terminal pair defined over a number field $\KK$.  Let $L$ be an ample line bundle on $X$ and defined over $\KK$.  Fix a finite set of places $S \subseteq M_{\KK}$.  For each $v \in S$, let $E_v$ be a prime divisor  over $X$ and having field of definition some finite extension field of $\KK$.  Assume that $(X,\Delta)$ is not arithmetically $\K$-stable with respect to $L$ and $E_v$, for each 
$v \in S\text{.}$ 
Moreover, suppose that
$R_v \in \RR_{>0}\text{,}$ 
for $v \in S$, are destabilizing Roth constants as in \eqref{arithmetic:K:stable:inequality}.  Then, there exists a proper Zariski closed subset 
$$W \subsetneq X\text{,}$$ 
defined over $\KK$, and at least one place $v \in S$, so that the approximation constants $\alpha_{E_v}(\{x_i\},L)$, which are defined  for all infinite sequences of distinct $\KK$-rational points 
$$\{x_i\} \subseteq X(\KK) \setminus W(\KK)\text{,}$$ 
satisfy the inequality
$$
\alpha_{E_v}(\{x_i\},L) \geq 1 / R_v \text{.}
$$  
\end{theorem}

We prove Theorem \ref{Roth:constant:theorem} in Section \ref{Roth:constants:delta:invariant}.  It is a consequence of Theorem \ref{Roth:Constants:Thm}, which is a result that is of an independent interest.  Specifically, Theorem \ref{Roth:Constants:Thm} uses Schmidt's Subspace Theorem to establish a logarithmic form of \cite[Theorem 5.1]{McKinnon-Roth}.  The approximation constants $\alpha_{E_v}(\{x_i\},L)$, which arise in Theorem \ref{Roth:constant:theorem}, are defined in Definition \ref{divisorial:val:approx:contant}.

\subsection*{Acknowledgements}  
This work began while I was a postdoctoral fellow at the University of New Brunswick, Fredericton NB, where I was financially supported by an AARMS postdoctoral fellowship.  
It benefited from a visit to the Atlantic Algebra Centre, St.~John's NL, during March of 2017.  Some later portions of this work were completed while I was a postdoctoral fellow at Michigan State University.  They profited from visits to CIRGET, Montreal, during May of 2019, ICERM, Providence, during June of 2019, NCTS and Institute of Mathematics Academia Sinica, Taipei, during June of 2019, and the American Institute of Mathematics, San Jose, during January of 2020.  
Finally, I thank the Natural Sciences and Engineering Research Council of Canada for their support through my grants DGECR-2021-00218 and RGPIN-2021-03821 and colleagues and anonymous referees for their interest and helpful thoughtful comments.

\section{Preliminaries}

In this section, we collect various facts about growth of linear series and filtrations of section rings.  

\subsection*{Conventions and terminology} Here, and elsewhere, all varieties are assumed to be irreducible and reduced.  Unless mentioned otherwise, all varieties are defined over $\overline{\KK}$, an algebraically closed characteristic zero field.  In Sections \ref{approx:constants} and \ref{Roth:constants:delta:invariant}, $\overline{\KK}$ will be a fixed algebraic closure of a number field $\KK$ and we will consider varieties over $\KK$.  Finally, similar to \cite[Definition 2.24]{Kollar:Mori:1998}, by a \emph{prime divisor over} a projective variety $X$, is meant a nonzero, irreducible, reduced and effective Cartier divisor that is supported on some normal proper model of $X$.  

\subsection*{Asymptotic growth of linear series}
Recall positivity for projective varieties, especially \cite[Section 2.2.C]{Laz}, \cite{Ein-et-al:2005}
and \cite{Ein-et-al-09}.  Let $X$ be a $d$-dimensional projective variety.  Denote the section ring of a line bundle $L$ on $X$ as
$$R(L) = R(X,L) := \bigoplus_{m \geq 0} \H^0(X,L^{\otimes m}) = \bigoplus_{m \geq 0} R_m \text{,}$$  
and the \emph{volume} of $L$ by
$$ 
\Vol_X(L) = \Vol(L) := \limsup_{m \to \infty} \frac{h^0(X,L^{\otimes m})}{m^d / d!} = \lim_{m \to \infty} \frac{h^0(X,L^{\otimes m})}{m^d / d!} \text{.}
$$
Recall that the volume function $\Vol(\cdot)$ extends to give a continuous function on the real N\'{e}ron-Severi space of $X$.

Next, let 
$$Z \subsetneq X$$ 
be a subvariety of dimension $\ell > 0$.  Then the \emph{restricted volume} of $L$ along $Z$ is
$$
\Vol_{X | Z}(L) := \limsup_{m\to \infty} \frac{h^0(X|Z, L^{\otimes m})}{m^\ell / \ell!} = \lim_{m \to \infty } \frac{h^0(X|Z, L^{\otimes m})}{m^\ell / \ell!}.
$$
Here $h^0(X|Z, L^{\otimes m})$ denotes the dimension of the image of the restriction map 
$$
\H^0(X,L^{\otimes m}) \rightarrow \H^0(Z, L^{\otimes m}|_Z).
$$
There is a concept of restricted volume of numerical classes of $\RR$-divisors along $Z$.  For this, it is required that $Z$ is not contained in the augmented base locus of the given $\RR$-divisor.

Given a function $f(m)$, represented by a polynomial of degree at most $k$ in $m$ for large $m$, we denote by $\mathrm{n.l.c.}(f)$ the \emph{normalized leading coefficient} of $f$.  This is defined to be the number $e$ for which
$ f(m) = e m^k / k! + \mathrm{O}(m^{k-1})$.

\subsection*{Filtered linear series}  Fix a \emph{big} line bundle 
$$L = \Osh_X(D)$$ 
on $X$, a $d$-dimensional projective variety.  All of the \emph{filtrations} 
$$\mathcal{F}^\bullet = \mathcal{F}^\bullet R(L)\text{,}$$ 
that we consider here are assumed to be \emph{decreasing}, \emph{left-continuous} 
and \emph{multiplicative}.   These conventions are similar to those of \cite[Definition 1.3]{Boucksom:Chen:2011}, \cite[Section 1.1]{Boucksom-Hisamoto-Jonsson:2016},  \cite[Definition 4.1]{Fujita:2018} and \cite[Section 2]{Grieve:MVT:2019}.  

In particular, by our conventions,  such filtrations $\mathcal{F}^\bullet$ have the properties that 
\begin{enumerate}
\item{
if $m \in \ZZ_{\geq 0}$, then
\begin{itemize}
\item{$\mathcal{F}^t R_m \subseteq \mathcal{F}^{t'} R_m$, for all $t,t'\in \RR$ with $t \geq t'$;}
\item{$\mathcal{F}^t R_m = \mathcal{F}^{t - \epsilon} R_m$, for all $t \in \RR$ and all  sufficiently small $\epsilon > 0$; and}
\end{itemize}
}
\item{ 
if $m,m' \in \ZZ_{\geq 0}$ and $t,t'\in\RR$, then
$$ \mathcal{F}^t R_m \cdot \mathcal{F}^{t'}R_{m'} \subseteq \mathcal{F}^{t + t'} R_{m+m'}\text{.}$$ 
}
\end{enumerate}
Moreover, if 
$$\mathcal{F}^t R_m = \mathcal{F}^{\lceil t \rceil} R_m\text{,}$$ 
for all $t \in \RR$, then $\mathcal{F}^\bullet R$ is said to be a \emph{$\ZZ$-filtration}.

The \emph{vanishing numbers} of a filtration $\mathcal{F}^\bullet$ have the property that
$$
a_{\min}(L^{\otimes m}) = a_0(L^{\otimes m}) \leq \dots \leq a_{n_m}(L^{\otimes m}) = a_{\max}(L^{\otimes m}) \text{;}
$$ 
they are defined by the condition that
$$
a_j( L^{\otimes m}) = a_j( L^{\otimes m},\mathcal{F}^\bullet) := \inf \left\{ t \in \RR : \codim \mathcal{F}^t \H^0(X, L^{\otimes m}) \geq j + 1 \right\}.
$$
Put
$$a_{\min}(||L||) := \liminf_{m\to \infty} \frac{a_{\min}( L^{\otimes m})}{ m } 
$$
and
$$
a_{\max}(||L||) := \limsup_{m\to \infty}  \frac{a_{\max}( L^{\otimes m}) } { m }.
$$

In what follows, our interest are those $\RR$-filtrations $\mathcal{F}^\bullet R$ which are 
 \emph{pointwise bounded below} and \emph{linearly bounded above}.
 Especially, they have the property that the quantities $a_{\min}(||L||)$ and $a_{\min}(||L||)$, as defined above, are well-defined real numbers.
 
In more precise terms, $\mathcal{F}^\bullet R$ is \emph{pointwise bounded below} if for all $m\geq 0$, there exists a real number $T \in \RR$ with
$$
\mathcal{F}^t R_m = R_m \text{,}
$$
for all $t \leq  T$. 
On the other hand, if there exists a constant $C > 0$, which has the property that 
$$a_{\min}(L^{\otimes m}) \geq - C m \text{,}$$
for all 
$m \geq 0$,  
then $\mathcal{F}^\bullet$ is said to be \emph{linearly bounded below}.  If 
$$
a_{\max}(L^{\otimes m}) \leq Cm \text{,}
$$
for all 
$m \geq 0$ 
and some constant 
$C>0$, 
then $\mathcal{F}^\bullet R$ is said to be \emph{linearly bounded above}.  

If $\mathcal{F}^\bullet R$ is both linearly bounded above and linearly bounded below, then $\mathcal{F}^\bullet R$ is said to be \emph{linearly bounded}, or simply \emph{bounded}.  Further, when no confusion is likely, all linearly bounded above filtrations are assumed to be pointwise bounded below.   For such filtrations we tacitly assume that $a_{\min }(L^{\otimes m}) \geq 0$, for all $m \geq 0$.

Henceforth, given a linearly bounded above and pointwise bounded below $\RR$-filtration $\mathcal{F}^\bullet R$, for each $a \in \RR$, let $\delta_a(t)$ denote the Dirac distribution with support $a \in \RR$.   Moreover, for each integer $m  \geq 0$, define discrete measures on the real line $\RR$, by the condition that
\begin{equation}\label{discrete:measure:nu}
\nu_m = \nu_m(t) := \frac{1}{h^0(X, L^{\otimes m})} \sum_j \delta_{m^{-1} a_j( L^{\otimes m})}(t),
\end{equation}
 \cite[page 1213]{Boucksom:Chen:2011} and \cite[page 813]{Boucksom:Kuronya:Maclean:Szemberg:2015}.  As emphasized in   \cite{Boucksom:Chen:2011}, \cite{Boucksom:Kuronya:Maclean:Szemberg:2015} and  \cite{Grieve:MVT:2019}, we can consider the limit of the discrete measures $\nu_m$
$$ \nu = \lim_{m \to \infty} \nu_m.$$

We conclude this section by describing two important examples of filtrations (see Example \ref{Filtrations:Example}).  In Section \ref{very:ample:test:config}, we discuss in detail the filtrations that arise via linearly normal embeddings and one parameter subgroups of the general linear group.

\begin{example}\label{Filtrations:Example}
Let $L$ be a big line bundle on a projective variety $X$ with section ring 
$$
R(X,L) := \bigoplus_{m \geq 0} R_m = \bigoplus_{m \geq 0}  \H^0(X,L^{\otimes m}) \text{.}
$$
\begin{enumerate}
\item{When $X$ is normal, an important class of filtrations is obtained via orders of vanishing along subvarieties.  Such filtrations may be constructed in the following way.  First, if 
$Z \subsetneq X$ 
is a proper subscheme, then let 
$$\pi \colon X' \rightarrow X$$ 
be the (normalized) blowing-up morphism, with exceptional divisor $E$, of $X$ along $Z$.  Then $X'$ is the normalization of 
$$\operatorname{Bl}_Z(X) \rightarrow X\text{,}$$ 
the blowing-up of $X$ along $Z$.  
The corresponding filtration $\mathcal{F}^\bullet R$, then has the property that 
$$
\mathcal{F}^t R_m = \H^0\left(X, L^{\otimes m} \otimes \pi_* (\Osh_{X'}(- \lceil t E \rceil ))\right) = \H^0\left(X', \pi^* L^{\otimes m} \otimes \Osh_{X'}(- \lceil t E \rceil)\right)
$$
for all 
$m \geq 0$ 
and all 
$t \in \ZZ \text{.}$  
It is easily verified, see \cite[Example 2.2]{Grieve:MVT:2019}, that such filtrations are, indeed, bounded.
}\label{orders:vanishing}
\item{Assume that $L$ is ample.  If $\mathcal{F}^\bullet R$ is an $\RR$-filtration of $R$, then for each 
$m \in \ZZ_{\geq 0}$ 
and each 
$t \in \ZZ\text{,}$ 
let
$I_{(m,t)}$ denote the image of the evaluation homomorphism
$$
\mathcal{F}^t R_m \otimes L^{\otimes - m} \rightarrow \Osh_X \text{.}
$$
Moreover, set 
$$
\bar{\mathcal{F}}^t R_m := \H^0(X,L^{\otimes m} \cdot I_{(m,t)}) \text{.}
$$
Then, it is known by \cite[Section 4.3]{Fujita:2018}, for instance, that $\bar{\mathcal{F}}^\bullet R_m$ is an $\RR$-filtration in the sense described above.  This is called the \emph{saturation} of $\mathcal{F}^\bullet R$.  In case that 
\begin{equation}\label{saturated:defn}
\bar{\mathcal{F}}^t R_m = \mathcal{F}^t R_m \text{,}
\end{equation} 
for all 
$t \in \ZZ$ 
and all 
$m \in \ZZ_{\geq 0}\text{,}$ 
then the filtration $\mathcal{F}^\bullet R$ is called \emph{saturated}.  In case that \eqref{saturated:defn} is valid for all sufficiently large $m > 0$ and all $t \in \RR$, then, here, we say that $\mathcal{F}^\bullet$ is  \emph{asymptotically saturated}.  For example, when $X$ is normal, the filtrations that are obtained via orders of vanishing along subvarieties, as described in (a) above, are known to be saturated.  Indeed, this follows from \cite[Lemma 1.8]{Boucksom-Hisamoto-Jonsson:2016} for example.
}\label{saturated:example}
\end{enumerate}
\end{example}

\section{Convex bodies and concave transforms}
Let $X$ be a projective variety of dimension $d$.
The theory of \emph{Okounkov bodies}, \cite{Lazarsfeld:Mustata:2009}, \cite{Kaveh:Khovanskii:2012}, associates a convex body $\Delta(L)$ to each big line bundle $L$ on $X$.  More generally, there is a concept of Okounkov body $\Delta(W_\bullet)$, for 
$$W_\bullet = \bigoplus_{m \geq 0} W_m$$ 
a \emph{graded linear series} 
$$W_m \subseteq \H^0(X,L^{\otimes m})\text{.}$$
Recall that such convex bodies are determined by the valuation like functions
$$
\nu_{X_\bullet} = \nu_{X_\bullet,L^{\otimes m}} \colon \H^0(X,L^{\otimes m}) \rightarrow \ZZ^d \bigcup \{\infty\}
$$
which depend on the choice of a fixed \emph{admissible flag} of irreducible codimension $i$ subvarieties, for $i = 1,\dots,d$.  We refer to \cite[Section 3]{Grieve:MVT:2019} for a more detailed overview.

Given a pointwise bounded below and linearly bounded above $\RR$-filtration 
$$\mathcal{F}^\bullet = \mathcal{F}^\bullet R(L)\text{,}$$ 
put, for each $t \in \RR$,
$$
W^t_m := \mathcal{F}^{mt} \H^0(X,L^{\otimes m}) 
$$
and 
$$
W_\bullet^t := \bigoplus_{m \geq 0} W^t_m  \text{.}
$$  
The \emph{concave transform} of $\mathcal{F}^\bullet$, \cite[Definition 1.8]{Boucksom:Chen:2011}, is the concave function
$$
G_{\mathcal{F}^\bullet} \colon \Delta(L) \rightarrow [ -\infty,\infty [
$$
which is defined by the conditions that
$$
G_{\mathcal{F}^\bullet}(y) := \sup \{ t \in \RR : y \in \Delta(W^t_\bullet) \} \text{.}
$$

For later use we record Proposition \ref{expectation:concave:transform} below.  It is a consequence of \cite[Theorem 1.1]{Grieve:MVT:2019}.

\begin{proposition}\label{expectation:concave:transform} Suppose that $L$ is a big line bundle on a $d$-dimensional projective variety $X$.  Let $\mathcal{F}^\bullet$ be a  pointwise bounded below and linearly bounded above $\RR$-filtration of its section ring and let
 $\lambda$ be the restriction of Lebesgue measure of $\RR^d$ to $\Delta(L)^{\circ}$, the interior of the Okounkov body of $L$, and let $(G_{\mathcal{F}^\bullet})_*\lambda$ be its pushforward to $\RR$ by the concave transform $G_{\mathcal{F}^\bullet}$.  Then the limit expectation, $\mathbb{E}(\nu)$, of $\nu$ can be described as
\begin{equation}\label{expectation:concave:transform:eqn}
\mathbb{E}(\nu) 
= \frac{d!}{\operatorname{Vol}(L)} \int_0^{a_{\max}(||L||)} t \cdot \mathrm{d}((G_{\mathcal{F}^\bullet})_*\lambda).
\end{equation}
\end{proposition}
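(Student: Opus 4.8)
The plan is to reduce the statement to the identification of the limit measure $\nu$ with a pushforward of Lebesgue measure by the concave transform, and then to evaluate the first moment of that pushforward. First I would invoke the main convergence theorem of Boucksom and Chen, \cite[Theorem 1.11]{Boucksom:Chen:2011}: under the standing hypotheses on $\mathcal{F}^\bullet$ (decreasing, left-continuous, multiplicative, pointwise bounded below, linearly bounded above), the discrete measures $\nu_m$ of \eqref{discrete:measure:nu} converge weakly, as $m \to \infty$, to
$$
\nu = \frac{d!}{\operatorname{Vol}(L)}\, (G_{\mathcal{F}^\bullet})_* \lambda,
$$
where $\lambda$ is Lebesgue measure on $\Delta(L)^{\circ}$. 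Since $\operatorname{Vol}_{\RR^d}(\Delta(L)) = \operatorname{Vol}(L)/d!$, this realizes $\nu$ as a Borel probability measure on $\RR$, consistent with each $\nu_m$ having total mass one.

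Next I would record that $\nu$ has compact support and identify the endpoints. Linear boundedness above gives $a_{\max}(mL) \leq C m$ for some constant $C$, while pointwise boundedness below bounds the $a_j(mL)$ from below; hence the supports of all the $\nu_m$, and therefore of $\nu$, lie in one fixed compact interval. Equivalently, on $\Delta(L)^{\circ}$ the concave transform $G_{\mathcal{F}^\bullet}$ is bounded, with values in $[a_{\min}(\|L\|), a_{\max}(\|L\|)]$. For the valuation-type filtrations that drive the applications (order of vanishing along the exceptional divisor $E$) one has $a_{\min}(\|L\|) \geq 0$, which is what justifies taking $0$ as the lower limit of integration in \eqref{expectation:concave:transform:eqn}. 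This uniform support bound is exactly the uniform integrability needed to pass the first moment through the weak limit, so that, whether $\mathbb{E}(\nu)$ is read as $\int_{\RR} t\, \mathrm{d}\nu(t)$ or as $\lim_{m} \mathbb{E}(\nu_m) = \lim_m \tfrac{1}{h^0(X,mL)}\sum_j m^{-1} a_j(mL)$, the two agree.

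Finally I would compute the moment by the change-of-variables formula for pushforward measures. Applying $\int \phi \, \mathrm{d}((G_{\mathcal{F}^\bullet})_*\lambda) = \int (\phi \circ G_{\mathcal{F}^\bullet}) \, \mathrm{d}\lambda$ with $\phi(t) = t$ gives
$$
\mathbb{E}(\nu) = \frac{d!}{\operatorname{Vol}(L)} \int_{\RR} t\, \mathrm{d}((G_{\mathcal{F}^\bullet})_*\lambda) = \frac{d!}{\operatorname{Vol}(L)} \int_{\Delta(L)} G_{\mathcal{F}^\bullet}(y)\, \mathrm{d}\lambda(y),
$$
and the support bound of the previous paragraph lets me restrict the one-dimensional integral to $[0, a_{\max}(\|L\|)]$, yielding precisely \eqref{expectation:concave:transform:eqn}.

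The step I expect to be the main obstacle is the interchange of limit and integration: weak convergence $\nu_m \to \nu$ alone does not force convergence of first moments, since $t \mapsto t$ is unbounded. The essential input is the uniform bound on the supports coming from the linear-boundedness-above and pointwise-boundedness-below hypotheses; with that in hand the interchange is routine and the remainder is the formal pushforward identity. A secondary point requiring care is the behaviour of $G_{\mathcal{F}^\bullet}$ on $\partial\Delta(L)$, where it may take the value $-\infty$; this is harmless because the boundary is $\lambda$-null, which is exactly why the statement is phrased using Lebesgue measure on the interior $\Delta(L)^{\circ}$.
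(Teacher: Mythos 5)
Your proof is correct and rests on essentially the same input as the paper's: both arguments reduce the statement to \cite[Theorem 1.11]{Boucksom:Chen:2011}, which identifies the weak limit $\nu$ of the measures $\nu_m$ with the normalized pushforward $\frac{d!}{\operatorname{Vol}(L)}(G_{\mathcal{F}^\bullet})_*\lambda$. The only difference is that the paper routes the computation through the intermediate layer-cake formula $\mathbb{E}(\nu)=\frac{d!}{\operatorname{Vol}(L)}\int_0^{a_{\max}(||L||)}g(t)\,\mathrm{d}t$ with $g(t)=\lim_m m^{-d}\dim\mathcal{F}^{mt}\H^0(X,mL)$ (an identity it reuses later), whereas you compute the first moment of the pushforward directly by change of variables; your explicit treatment of the passage from weak convergence to convergence of first moments via the uniform support bound, and of the lower limit of integration, supplies details the paper leaves implicit.
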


\begin{proof} 
If
$$ 
g(t) := \lim_{m\to \infty} m^{-d} \dim \mathcal{F}^{mt} \H^0(X, L^{\otimes m}),
$$
then it is a consequence of \cite[Proof of Theorem 1.1]{Grieve:MVT:2019}, see also \cite[Proof of Theorem 1.11]{Boucksom:Chen:2011}, that the expectation $\mathbb{E}(\nu)$, of the limit measure $\nu$, can be described as
\begin{equation}\label{mu:expectation:integral:volume:function}
\mathbb{E}(\nu) = \frac{d!}{\operatorname{Vol}(L)} \int_0^{a_{\max}(||L||)} g(t) \mathrm{d}t = \frac{d!}{\operatorname{Vol}(L)}  
 \int_0^{a_{\max}(||L||)} t \cdot \mathrm{d}((G_{\mathcal{F}^\bullet})_*\lambda).
\end{equation}
\end{proof}

\section{Linearly normal embeddings, filtrations and Chow weights}\label{discrete:measures:chow}

The goal of this section is to establish Theorem \ref{Linearly:normal:Chow:Weights:Thm}.  It is essentially due to Mumford (see \cite[Section 2]{Mumford:77} and \cite[Section 3]{Morrison:1980}).  Here, our approach is based on the refinements which have arisen within the area  of K-stability.  They include work of Odaka,  \cite{Odaka:2013}, Boucksom, Hisamoto and Jonsson, \cite{Boucksom-Hisamoto-Jonsson:2016}, and Fujita \cite{Fujita:2018}.  

Theorem \ref{Linearly:normal:Chow:Weights:Thm} treats the case of linearly normal embeddings and \emph{asymptotically saturated} $\ZZ$-filtrations of the section ring. As in \cite[Corollary 3.2]{Grieve:2018:autissier}, it includes the class of filtrations which arise from \emph{inflectionary embeddings}, i.e.,  those embeddings which are defined by a basis which is compatible with the filtration of the section ring that is induced by orders of vanishing along a given subvariety.  

Note also, that Theorem \ref{Linearly:normal:Chow:Weights:Thm} is an improved form, for the case of very ample linear series, of \cite[Lemma 4.7]{Fujita:2018}.  To provide further motivation to these results, in Section \ref{very:ample:test:config}, we consider the case of those filtrations that arise from ample test configurations for linearly normal projective varieties.

In order to formulate Proposition \ref{vanishing:number:chow:weights:prop}, let $L$ be a big line bundle on a projective variety $X$ and let 
$$\mathcal{F}^\bullet = \mathcal{F}^\bullet R$$ 
be a pointwise bounded below and linearly bounded above $\RR$-filtration of its section ring.   
$$R = R(X,L) = \bigoplus_{m \geq 0} \H^0(X,L^{\otimes m}) = \bigoplus_{m \geq 0} R_m \text{.}$$  
Recall, that our conventions are such that
$a_{\min}(L^{\otimes m}) \geq 0\text{,}$
for all $m \geq 0$.  In particular, the vanishing numbers 
$$
a_j(L^{\otimes m}) = \inf \left\{ t \in \RR : \operatorname{codim} \mathcal{F}^t \H^0(X,L^{\otimes m}) \geq j + 1 \right\} \text{,}
$$
for 
$m \in \ZZ_{\geq 0}\text{,}$ 
have the property that
$$
0 \leq a_{\min}(L^{\otimes m}) = a_0(L^{\otimes m}) \leq \dots \leq a_{n_m}(L^{\otimes m}) = a_{\max}(L^{\otimes m}) \text{.}
$$

If $m \in \ZZ_{\geq 0}$, then let
\begin{equation}\label{Hilbert:weights:defn:0}
 s(m,\mathcal{F}^\bullet) := \sum_{j=0}^{n_m} a_j(L^{\otimes m})
\end{equation}
We say that this is the \emph{$m$th Hilbert weight} for $L$ with respect to the pointwise bounded below and linearly bounded above filtration $\mathcal{F}^\bullet$.  

Note that, in general, the Hilbert weight $s(m,\mathcal{F}^{\bullet})$ is not determined by the \emph{weight vector} of $\H^0(X,L)$.  But, compare with the case of very ample line bundles and filtrations via orders of vanishing along a subvariety, and more generally the case of very ample line bundles and bounded asymptotically saturated filtrations (see for instance Theorem \ref{Linearly:normal:Chow:Weights:Thm} and Corollary \ref{hilbert:weight:asymptotic:expansion}).

For later use, we record the following proposition.  It applies, in particular, to inflectionary embeddings in the sense described above.

\begin{proposition}\label{vanishing:number:chow:weights:prop}      
In the above setting just described, namely if $L$ is a big line bundle on a projective variety $X$ and if $\mathcal{F}^\bullet$ is a pointwise bounded below and linearly bounded above $\RR$-filtration of the section ring $R(X,L)$, then 
\begin{equation}\label{prop:4.1:eqn3}
\mathbb{E}(\nu_m) = \frac{s(m,\mathcal{F}^\bullet)}{m h^0(X, L^{\otimes m})}.
\end{equation}
\end{proposition}
\begin{proof}
Recall that by \eqref{Hilbert:weights:defn:0}
\begin{equation}\label{prop:4.1:eqn1}
s(m,\mathcal{F}^\bullet) = \sum_{j=0}^{n_m} a_j(L^{\otimes m})
\end{equation}
and it also follows from the description of the measures $\nu_m$, given in \eqref{discrete:measure:nu}, that
\begin{equation}\label{prop:4.1:eqn2}
h^0(X,L^{\otimes m}) \mathbb{E}(\nu_m) = \sum_{j=0}^{n_m} \int_0^{a_{\max}(||L||)} t \cdot \delta_{a_j(L^{\otimes m})m^{-1}}(t) \mathrm{d}t = \sum_{j=0}^{n_m} a_j(L^{\otimes m}) m^{-1}\text{.}
\end{equation}
Together \eqref{prop:4.1:eqn1} and \eqref{prop:4.1:eqn2} yield \eqref{prop:4.1:eqn3}.
\end{proof}

We now proceed to discuss Theorem \ref{Linearly:normal:Chow:Weights:Thm}.  It builds on earlier work of Mumford, including \cite{Mumford:77} and \cite{Morrison:1980}, Odaka, for instance  \cite{Odaka:2013}, and Fujita, especially \cite[Section 4]{Fujita:2018}.  
The conclusion of Theorem \ref{Linearly:normal:Chow:Weights:Thm} applies to the class of filtrations, which are obtained via weight vectors as described in Section \ref{very:ample:test:config}.  

It also applies to \emph{inflectionary embeddings}, by which we mean those embeddings of normal varieties, which are compatible with a filtration that is induced by orders of vanishing along a given subvariety.  We refer to Example \ref{Filtrations:Example} (a), for a more detailed description of such filtrations.  As one additional remark,  as will become apparent in our proof of Theorem \ref{Linearly:normal:Chow:Weights:Thm}, our conventions for \emph{Hilbert weights} differ, by a minus sign, from those of \cite[page 417]{Fujita:2018}.

In order to formulate Theorem \ref{Linearly:normal:Chow:Weights:Thm}, let $L$ be a very ample line bundle on a projective variety $X$ and let $\mathcal{F}^\bullet R$ be an asymptotically saturated bounded $\ZZ$-filtration of its section ring.   Assume that 
$a_{\min}(L) = 0\text{.}$

Given 
$m \in \ZZ_{\geq 0}$ 
and 
$t \in \RR\text{,}$ 
let 
$$
I^{\mathcal{F}^\bullet}_{(m,t)} = I_{(m,t)}
$$
be the image of the natural evaluation homomorphism
$$
\mathcal{F}^t R_m \otimes L^{\otimes -m} \rightarrow \Osh_X \text{;}
$$
put
$$
\bar{\mathcal{F}}^t R_m = \H^0(X,L^{\otimes m} \cdot I_{(m,t)}) \text{.}
$$
 Let
$$
X_{\AA^1} := X \times \AA^1_{\overline{\KK}} 
$$
and denote by $L_{\AA^1}$ the pullback of $L$ to $X_{\AA^1}$, under the projection map to the first factor.
 Then
 \begin{equation}\label{flag:ideal:global:sections:eqn:1}
 \H^0(X_{\AA^1},L_{\AA^1}) = \H^0(X,L) \otimes \overline{\KK}[x] \text{.}
 \end{equation}
 
 Let
$$\Ish \subseteq \Osh_{X_{\AA^1}}$$ 
be the ideal sheaf that is defined by the condition that
$$
\Ish = I_{(1,a_{\max}(L))} + I_{(1,a_{\max}(L) - 1)} z + \dots + I_{(1,1)} z^{a_{\max}(L) -1} + (z^{a_{\max}(L)}) \text{.}
$$
For all 
$m \geq 0\text{,}$ write
$$
\Ish^m \subseteq \Osh_{X_{\AA^1}}
$$
as
$$
\Ish^m = J_{(m,m a_{\max}(L)) } + J_{(m, ma_{\max}(L) -1)}z^1 + \dots + J_{(m,1)}z^{m a_{\max}(L) - 1 }+ (z^{m a_{\max}(L)}) \text{,}
$$
where $J_{(m,t)}$ is the ideal sheaf of $\Osh_X$ that is defined by the condition that 
$$
J_{(m,t)} : = \sum_{\substack{ t_1+\dots+t_m = t \\ \text{ and } \\ t_1,\dots,t_m \in [0,a_{\max}(L)] \bigcap \ZZ } } I_{(1,t_1)} \cdot \hdots \cdot I_{(1,t_m)} \text{.}
$$

Then
$$
L_{\AA^1}^{\otimes m} \cdot \Ish^m   =  \sum_{i=0}^{m a_{\max}(L)}  L^{\otimes m} \cdot J_{(m, ma_{\max}(L) -i)}z^i 
$$
and 
\begin{equation}\label{flag:ideal:global:sections:eqn:2}
\H^0(X_{\AA^1}, L_{\AA^1}^{\otimes m} \cdot \Ish^m) = \sum_{i=0}^{m a_{\max}(L)}  \H^0(X,L^{\otimes m} \cdot J_{(m, ma_{\max}(L) -i)}) z^i \text{.}
\end{equation}

With this above notation and hypothesis, Theorem \ref{Linearly:normal:Chow:Weights:Thm} is stated in the following way.

\begin{theorem}\label{Linearly:normal:Chow:Weights:Thm}
Let $L$ be a very ample line bundle on a projective variety $X$ and let $\mathcal{F}^\bullet R$ be an asymptotically saturated bounded $\ZZ$-filtration of its section ring.   Assume that 
$a_{\min}(L) = 0\text{.}$
Then, with the notations and hypothesis as above, for all $t \in \RR$ and all sufficiently large $m > 0$, it holds true that
$$
\H^0(X,L^{\otimes m} \cdot J_{(m,t)}) = \bar{\mathcal{F}}^t R_m = \H^0(X,L^{\otimes m} \cdot I_{(m,t)}) \text{.}
$$
Furthermore
\begin{align*}
s(m,\mathbf{c}) & := \sum_{i=0}^{n_m} a_i(L^{\otimes m}) \\
& = m a_{\max}(L) h^0(X,L^{\otimes m}) - \sum_{t = 1}^{a_{\max}(L^{\otimes m})} h^0\left(X,L^{\otimes m} \cdot J_{(m,t)}\right) \\
&  =  \dim_{\overline{\KK} } \H^0(X_{\AA^1},  L^{\otimes m}_{\AA^1}) / \H^0(X_{\AA^1},  L^{\otimes m}_{\AA^1} \cdot \Ish^m) 
\text{.}
\end{align*}
\end{theorem}

\begin{proof}  The proof of Theorem \ref{Linearly:normal:Chow:Weights:Thm} is achieved via the theory of flag ideals (compare, for instance, with \cite[Section 3]{Morrison:1980}, \cite[Definition 3.1]{Odaka:2013} and \cite[Section 4]{Fujita:2018}).

First of all, by assumption, $a_{\min}(L) = 0$.  Thus,
the vanishing numbers $a_j(L^{\otimes m})$, for $j = 0,\dots,n_m$ and all sufficiently large 
$m > 0$, 
have the property that
$$
0 = a_{\min}(L^{\otimes m}) = a_0(L^{\otimes m}) \leq \dots \leq a_{n_m}(L^{\otimes m}) = a_{\max}(L^{\otimes m}) \text{.}
$$

Next, let $V_{(m,t)}$ be the image of the natural map
$$
\bigoplus_{\substack{ t_1 + \dots + t_m = t \\
\text{ and } \\
t_1,\dots,t_m \in [0, a_{\max}(L)] \bigcap \ZZ }
} \bar{\mathcal{F}}^{t_1} R_1 \otimes \dots \otimes \bar{\mathcal{F}}^{t_m} R_1 \rightarrow \bar{\mathcal{F}}^t R_{m } \text{.}
$$
Then, the ideal sheaves 
$$
J_{(m,t)} : = \sum_{\substack{ t_1+\dots+t_m = t \\ \text{ and } \\ t_1,\dots,t_m \in [0,a_{\max}(L)] \bigcap \ZZ } } I_{(1,t_1)} \cdot \hdots \cdot I_{(1,t_m)} 
$$
are the image of the natural evaluation map
$$
V_{(m,t)} \otimes L^{\otimes -m} \rightarrow \Osh_X \text{.}
$$
Moreover, observe that if 
$$t \in [0, m a_{\max}(L)] \text{,}$$ 
then
\begin{equation}\label{flag:ideal:eqn}
V_{(m,t)} \subseteq \H^0\left(X,L^{\otimes m} \cdot J_{(m,t)}\right) \subseteq \bar{\mathcal{F}}^t R_{m } \text{.}
\end{equation}

Now, by assumption, the polarizing line bundle, $L$ is assumed to be very ample.  In particular, 
the natural map
\begin{equation}\label{sym:alg:surj}
\Sym^m \left(R_1\right) \rightarrow R_m
\end{equation}
is surjective, for all sufficiently large $m > 0$.
Moreover, this map is compatible with the filtration $\mathcal{F}^\bullet R$ and the induced filtration $\mathcal{F}^\bullet\left(\Sym^\bullet\left(R_1\right)\right)$.  
In particular, for all $t \in \RR$  
$$
\mathcal{F}^t \left(\Sym^m\left(R_1\right)\right) \subseteq \mathcal{F}^t R_m \text{,}
$$
for all $m \in \ZZ_{\geq 0}$.

Further, recall that, by assumption, the filtration $\mathcal{F}^\bullet R$ is asymptotically saturated. Thus, for all sufficiently large $m > 0$, 
$$
\mathcal{F}^t R_m = \bar{\mathcal{F}}^t R_m := \H^0(X,L^{\otimes m} \cdot I_{(m,t)}) \text{,}
$$
for all 
$t \in \ZZ$.  It is then evident that, because of the asymptotic surjectivity of \eqref{sym:alg:surj}, that for all sufficiently large $m>0$, the natural maps
$$
\bigoplus_{\substack{ t_1 + \dots + t_{m} = t \\
t_1,\dots,t_m \in [0,a_{\max}(L)] \bigcap \ZZ  }} \bar{\mathcal{F}}^{t_1} R_1 \otimes \dots \otimes \bar{\mathcal{F}}^{t_m} R_1 \rightarrow \bar{\mathcal{F}}^t R_m 
$$
are surjective for all  
$t \in \ZZ$.

Moreover, since the maps \eqref{sym:alg:surj} are surjective, for all sufficiently large $m > 0$, it follows that
$$
m a_{\max}(L) = a_{\max}(L^{\otimes m})
$$
and
$$
V_{(m,t)} = \bar{\mathcal{F}}^t R_{m} \text{.}
$$
Thus, by \eqref{flag:ideal:eqn}, for all sufficiently large $m > 0$, it follows that 
$$
 \H^0(X,L^{\otimes m} \cdot J_{(m,t)}) = \H^0(X, L^{\otimes m} \cdot I_{(m,t)}) \text{.}
$$

In conclusion, the above discussion implies, similar to \cite[Proposition 3.2]{Morrison:1980}, that for all sufficiently large $m > 0$, it then holds true that
\begin{align*}
s(m,\mathbf{c}) & := \sum_{i=0}^{n_m} a_i(L^{\otimes m}) \\
& = m a_{\max}(L) h^0(X,L^{\otimes m}) - \sum_{t = 1}^{a_{\max}(L^{\otimes m})} h^0(X,L^{\otimes m} \cdot J_{(m,t)}) \\
& = 
\dim_{\overline{\KK} } \H^0(X_{\AA^1},  L^{\otimes m}_{\AA^1})/ \H^0(X_{\AA^1},  L^{\otimes m}_{\AA^1} \cdot \Ish^m) 
\text{.}
\end{align*}
Here, the last most inequality follows from \eqref{flag:ideal:global:sections:eqn:1} and \eqref{flag:ideal:global:sections:eqn:2}.
\end{proof}

In the setting of Theorem \ref{Linearly:normal:Chow:Weights:Thm}, so, in particular, $L$ is a very ample line bundle on a projective variety $X$ and $\mathcal{F}^\bullet R$ is an asymptotically saturated bounded $\ZZ$-filtration of its section ring, we say that
$$
s(m,\mathbf{c}) = 
s(m,\mathcal{F}^\bullet) := \sum_{j=0}^{n_m} a_j(L^{\otimes m})
$$
is the \emph{$m$th Hilbert weight} for $L$ with respect to the filtration $\mathcal{F}^\bullet$.  We also say that this is the \emph{$m$th Hilbert weight} for $L$ with respect to the \emph{weight vector} 
\begin{equation}\label{filtration:weight:vector}
\mathbf{c} = (a_0(L),\dots,a_n(L)) \in \ZZ^{n+1} \text{.}
\end{equation}

As one consequence, Theorem \ref{Linearly:normal:Chow:Weights:Thm} allows for a concept of \emph{normalized Chow weight} for $(X,L)$ with respect to the filtration $\mathcal{F}^\bullet$.  Indeed, this follows from Corollary \ref{hilbert:weight:asymptotic:expansion}.

\begin{corollary}\label{hilbert:weight:asymptotic:expansion}
With the notations and hypothesis of Theorem \ref{Linearly:normal:Chow:Weights:Thm}, for all sufficiently large $m > 0$, the Hilbert weight 
$$s(m,\mathbf{c}) := \sum_{i = 0}^{n_m} a_i(L^{\otimes m})$$ 
may be expressed as a polynomial
$$
s(m,\mathbf{c}) = \frac{e_X(\mathbf{c})}{(d+1)!} m^{d+1} + \mathrm{O}(m^d) \text{,}
$$
for a suitable constant $e_X(\mathbf{c})$.  Here, $d = \dim X$.
\end{corollary}

\begin{proof} 
By Theorem \ref{Linearly:normal:Chow:Weights:Thm}, if $m > 0$ is sufficiently large, then
\begin{equation}\label{hilbert:weight:polynomial}
s(m,\mathbf{c}) = \dim_{\overline{\KK} } \H^0(X_{\AA^1},  L^{\otimes m}_{\AA^1})/ \H^0(X_{\AA^1}, L^{\otimes m}_{\AA^1} \cdot \Ish^m) 
\text{.}
\end{equation}

The conclusion that is desired by Corollary \ref{hilbert:weight:asymptotic:expansion} then follows, upon noting that, by \cite[Proposition 2.6]{Mumford:77}, the right hand side of \eqref{hilbert:weight:polynomial},  for all sufficiently large $m > 0$, is a degree $d+1$ polynomial in $m$, for $d = \dim X$.
\end{proof}

Continuing with the setting of Corollary \ref{hilbert:weight:asymptotic:expansion}, consider $e_X(\mathbf{c})$, the \emph{normalized leading coefficient} of $s(m,\mathbf{c})$.  Then 
\begin{equation}\label{leading:coeff:eqn}
e_X(\mathbf{c}) = \operatorname{n.l.c.}(s(m,\mathbf{c})) \text{.}
\end{equation}
In what follows, we say that 
\begin{equation}\label{normalized:chow:weight:defn}
\frac{e_X(\mathbf{c})}{(d+1)(\operatorname{deg}_L X)}
\end{equation}
 is the \emph{normalized Chow weight for $(X,L)$, with respect to the weight vector \eqref{filtration:weight:vector}}.

It is possible to describe the normalized Chow weight \eqref{normalized:chow:weight:defn} in terms of the \emph{shifted weight vector}
$$
\mathbf{r} = (r_0,\dots,r_n) \in \ZZ^{n+1} \text{,}
$$ 
where
$$
r_i = r_i(L) = a_{\max}(L) - a_i(L) \text{,}
$$
for $i = 0,\dots, n$.  In more precise terms, put
\begin{equation}\label{chow:eqn:shifted:vector:eqn:0}
e_X(\mathbf{r}) = (d+1)(\operatorname{deg}_L X)  a_{\max}(L) - e_X(\mathbf{c}) \text{.}
\end{equation}
Then
\begin{equation}\label{chow:eqn:shifted:weight:vector:eqn}
\frac{e_X(\mathbf{c})}{(d+1)(\operatorname{deg}_L X) } = a_{\max}(L) - \frac{e_X(\mathbf{r})}{(d+1)(\operatorname{deg}_L X)} \text{.}
\end{equation}

Also, by combining \eqref{leading:coeff:eqn} and \eqref{chow:eqn:shifted:vector:eqn:0}, it follows that
\begin{align}\label{chow:eqn:shifted:weight:vector:eqn:prime}
s(m,\mathbf{c}) & = \frac{-e_X(\mathbf{r}) + (d+1)(\operatorname{deg}_L X)a_n(L)}{(d+1)!} m^{d+1} - \mathrm{O}(m^d)  \nonumber \\
& = \frac{- e_X(\mathbf{r}) }{(d+1)! } m^{d+1} + m \operatorname{Hilb}_{S/I}(m) a_n(L) + \mathrm{O}(m^d) \text{,}
\end{align}
for $\operatorname{Hilb}_{S/I}(m)$ the Hilbert polynomial of $X$ in $\PP^n$ with respect to the given embedding.

Theorem \ref{vanishing:number:normalized:chow:weights:cor} establishes the fact that the normalized Chow weight \eqref{normalized:chow:weight:defn} admits a description in terms of the Duistermaat-Heckman measure and the theory of Okounkov bodies.  It may be compared with the similar, although in general different, concept of Chow weights associated to test configurations (as arises in the K-stability literature including \cite{Szek:2015}). 

\begin{theorem}\label{vanishing:number:normalized:chow:weights:cor}
Let $L$ be a very ample line bundle on a $d$-dimensional projective variety $X$.  Let $\Delta(L)$ be its Okounkov body with respect to some admissible flag of subvarieties.  Let 
$$\mathcal{F}^\bullet = \mathcal{F}^\bullet R$$ 
be a saturated and bounded $\ZZ$-filtration of the section ring $R(X,L)$ and having  the property that $a_{\min}(L) = 0$.  Let
$$G_{\mathcal{F}^\bullet} \colon \Delta(L) \rightarrow [-\infty, \infty[ $$ 
be the concave transform of $\mathcal{F}^\bullet$.  Then, in this setting, the normalized Chow weight of $(X,L)$, with respect to $\mathcal{F}^\bullet$, is expressed, via $G_{\mathcal{F}^\bullet}$, in terms of the expectation of the Duistermaat-Heckman measure by the relation that
\begin{equation}\label{DH:measure:Chow:weight:eqn}
\frac{e_X(\mathbf{c})}{(d + 1)(\operatorname{deg}_L X)} = \frac{d!}{\Vol(L)} \int_0^{a_{\max}(||L||)} t \cdot \mathrm{d}\left(\left( G_{\mathcal{F}^\bullet}\right)_* \lambda \right) \text{.}
\end{equation}
\end{theorem}

\begin{proof}
Recall, that by Proposition \ref{vanishing:number:chow:weights:prop}
$$
\mathbb{E}(\nu_m) = \frac{s(m,\mathbf{c})}{m h^0(X,L^{\otimes m})} \text{.}
$$
Further, as noted in \eqref{leading:coeff:eqn}, Corollary \ref{hilbert:weight:asymptotic:expansion} implies that
$$
e_X(\mathbf{c}) = \operatorname{n.l.c.}(s(m,\mathbf{c})) \text{.}
$$

On the other hand, it is proven in \cite[Theorem 1.1]{Grieve:MVT:2019}, that
$$
\mathbb{E}(\nu) = \lim_{m \to \infty} \mathbb{E}(\nu_m)\text{.} 
$$
Thus, it follows in light of \eqref{chow:eqn:shifted:weight:vector:eqn} and \eqref{chow:eqn:shifted:weight:vector:eqn:prime}, that
\begin{equation}\label{chow:weight:proof:eqn}
\mathbb{E}(\nu) = \lim_{m \to \infty} \frac{s(m,\mathbf{c})}{m h^0(X,L^{\otimes m})} = \frac{e_X(\mathbf{c})}{(d+1)(\operatorname{deg}_L X)} \text{.}
\end{equation}
Finally, combining \eqref{chow:weight:proof:eqn} with Proposition \ref{expectation:concave:transform}, the conclusion is then that
$$
\frac{e_X(\mathbf{c})}{(d+1)(\operatorname{deg}_L X)} = \frac{d!}{\Vol(L)} \int_0^{a_{\max}(||L||)} t \cdot \mathrm{d}\left( \left( G_{\mathcal{F}^\bullet} \right)_*\lambda \right) \text{.}
$$
This establishes the desired equation \eqref{DH:measure:Chow:weight:eqn}.
\end{proof}

\section{One parameter subgroups, weight vectors, linearly normal embeddings and Rees algebras}\label{very:ample:test:config}

In this section, we study the correspondence between weight vectors and the ample test configurations that they determine, for linearly normal embeddings of projective varieties into projective space.  In doing so, we expose, in a slightly more refined form, the well known result of Ross and Thomas \cite[Proposition 3.7]{Ross:Thomas:2007}.

\subsection*{Linearly normal embeddings, one parameter subgroups and test configurations}
In what follows, we express the data of a one parameter subgroup, 
of the maximal torus, inside of the general linear group $\operatorname{GL}_{n+1}(\overline{\KK})$ in the form
\begin{equation}\label{1:PS:eqn1}
\rho(z) := z^{-k} \cdot \operatorname{diag}(z^{c_0},\dots,z^{c_n})
\end{equation}
for suitable integers
\begin{equation}\label{1:PS:eqn2}
k,c_0,\dots,c_n \in \ZZ
\end{equation}
with the property that
\begin{equation}\label{1:PS:eqn3}
0 \leq c_0 \leq \dots \leq c_n \text{.}
\end{equation}
We say that
\begin{equation}\label{1:PS:eqn4}
\mathbf{c} = (c_0,\dots,c_n) \in \ZZ^{n+1}
\end{equation}
is the one parameter subgroup's \emph{weight vector}.  Similarly, by a \emph{weight vector}, is meant a vector as in \eqref{1:PS:eqn4}, which has the property \eqref{1:PS:eqn3}.

We wish to clarify the manner in which weight vectors correspond to very ample test configurations for a given linearly normal projective variety.  But first, as in \cite[page 762]{Boucksom-Hisamoto-Jonsson:2016}, if $L$ is an ample line bundle on a projective variety $X$, then an \emph{ample test configuration} for $(X,L)$ consists of a flat and proper morphism 
$$\pi \colon \mathcal{X} \rightarrow \AA^1 \text{,}$$ 
together with a $\G_m$-action on $\mathcal{X}$, which lifts the canonical $\G_m$-action on $\AA^1$,  a $\G_m$-linearized $\pi$-ample line bundle $\mathcal{L}$ on $\mathcal{X}$ and an isomorphism 
$$(X,L) \simeq (\mathcal{X}_{z_0},\mathcal{L}|_{\mathcal{X}_{z_0}}) \text{,}$$ 
for one and hence all 
$$z_0 \in \AA^1 \setminus \{0\}\text{.}$$  
Denote the data of such a test configuration by $(\mathcal{X},\mathcal{L})$.  There is an evident concept of \emph{very ample test configuration} for linearly normal embeddings.  This is the context of Theorem \ref{weight:vectors:very:ample:test:configs} below.  It should be compared with {\cite[Proposition 3.7]{Ross:Thomas:2007}} and {\cite[Section 2.3]{Boucksom-Hisamoto-Jonsson:2016}}.

\begin{theorem}\label{weight:vectors:very:ample:test:configs}
Let $L$ be a very ample line bundle on a projective variety $X$.  Let $$n := h^0(X,L) - 1$$ 
and consider the linearly normal embedding
$$
X \hookrightarrow \PP^n
$$
that is determined by the complete linear series
$$
V = \H^0(X,L) \text{.}
$$
Then, there is a bijective correspondence between, on the one hand
\begin{itemize}
\item{
the collection of weight vectors
$$
\mathbf{c} = (c_0,\dots,c_n) \in \ZZ^{n+1} \text{,}
$$
with
$$
0 \leq c_0 \leq \dots \leq c_n \text{;}
$$
and, on the other hand
}
\item{
the set of isomorphism classes of very ample test configurations for $(X,L)$.
}
\end{itemize}
\end{theorem}

The proof of Theorem \ref{weight:vectors:very:ample:test:configs}, is essentially due to Donaldson \cite[Lemma 2]{Donaldson:2005}.  We include a proof, closely following \cite[Proposition 3.7]{Ross:Thomas:2007} and \cite[Section 2.3]{Boucksom-Hisamoto-Jonsson:2016}.

\begin{proof}[Proof of Theorem \ref{weight:vectors:very:ample:test:configs}]
Given a weight vector
$$
\mathbf{c} = (c_0,\dots,c_n) \in \ZZ^{n+1} \text{,}
$$
with 
$$
0 \leq c_0 \leq \dots \leq c_n \text{,}
$$
let
$$
\rho(z) \colon \G_m \rightarrow \GL_{n+1}(\overline{\KK})
$$
be the one parameter subgroup that it determines.  In more explicit terms
$$
\rho(z) := \operatorname{diag}(z^{c_0},\dots,z^{c_n}) \text{.}
$$
Then, the corresponding very ample test configuration is obtained by letting
$$
\mathcal{X} \subseteq \PP^n \times \AA^1 
$$
be the schematic closure of the closed embedding
$$
X \times \G_m \hookrightarrow \PP^n \times \G_m \text{,}
$$
that is defined by
$$
(x,z) \mapsto (\rho(z) \cdot x , z) \text{,}
$$
and 
$$
\mathcal{L} := \Osh_{\mathcal{X}}(1) = \Osh_{\PP^n_{\AA^1}}(1)|_{\mathcal{X}} \text{.}
$$
Conversely, given a very ample test configuration, $(\mathcal{X},\mathcal{L})$, for $(X,L)$, let 
$$
\pi \colon \mathcal{X} \rightarrow \AA^1 
$$
be the projection of $\mathcal{X}$ to $\AA^1$.  Then $\pi_* \mathcal{L}$ is a $\G_m$-linearized rank $n+1$ vector bundle on $\AA^1$.  It is thus $\G_m$-equivariently isomorphic to the trivial bundle $\AA^{n+1} \times \AA^1$ for a suitable one parameter subgroup
$$
\rho(z) := z^{-k} \cdot \operatorname{diag}(z^{c_0},\dots,z^{c_n}) \text{.}
$$
Finally, if two such very ample test configurations determine the same weight vector in this way, then they are isomorphic, as very ample test configurations of $(X,L)$.
\end{proof}

\begin{remark}
As is apparent in the proof of Theorem \ref{weight:vectors:very:ample:test:configs}, see also \cite[Section 2.3]{Boucksom-Hisamoto-Jonsson:2016}, the very ample test configuration $(\mathcal{X},\mathcal{L})$ that corresponds to a weight vector \eqref{1:PS:eqn4} has the property that its special fibre $(\mathcal{X}_{0}, \mathcal{L}|_{\mathcal{X}_{0}})$ is the flat limit, as $z \to 0$, of the flat family $(\mathcal{X}_z,\mathcal{L}|_{\mathcal{X}_z})$, $z \in \AA^1 \setminus \{0\}$.  \end{remark}

\subsection*{Standard $\ZZ$-filtrations} Let $L$ be a semi-ample line bundle on a projective variety $X$.  In what follows, we say that the finitely generated section ring
\begin{equation}\label{ZZ:filt:eqn:1}
R = R(X,L):= \bigoplus_{m \geq 0} R_m
\end{equation}
is \emph{generated in degree $k \in \NN$}, if the $\mathbb{N}$-graded $\overline{\KK}$-algebra
$$
R^{(k)} := \bigoplus_{m \geq 0} R_{km}
$$
is generated, as an $\NN$-graded $\overline{\KK}$-algebra, in degree $1$.  In case that the section ring \eqref{ZZ:filt:eqn:1} is generated, as an $\NN$-graded $\overline{\KK}$-algebra, in degree $1$, then we say that it is \emph{standard}.

Also a $\ZZ$-filtration 
\begin{equation}\label{ZZ:filt:eqn:1:filt:defn}
\mathcal{F}^\bullet = \mathcal{F}^\bullet R \text{,}
\end{equation}
of such a standard section ring \eqref{ZZ:filt:eqn:1}\text{,}
is called \emph{finitely generated} if the bigraded algebra 
\begin{equation}\label{ZZ:filt:eqn:2}
\bigoplus_{(t, m) \in \ZZ \times \ZZ_{\geq 0} } \mathcal{F}^t R_m 
\end{equation}
is finitely generated as a $\overline{\KK}$-algebra.  
As emphasized in \cite{Boucksom-Hisamoto-Jonsson:2016}, a given $\ZZ$-filtration \eqref{ZZ:filt:eqn:1:filt:defn} is finitely generated if and only if the graded $\overline{\KK}[z]$-algebra
\begin{equation}\label{ZZ:filt:eqn3}
\bigoplus_{m \in \ZZ_{\geq 0}} \left( \bigoplus_{t \in \ZZ} z^{- t} \mathcal{F}^t R_m \right)
\end{equation}
is finitely generated.  

Note that such finitely generated $\ZZ$-filtrations are bounded.   Henceforth, we say a finitely generated $\ZZ$-filtration \eqref{ZZ:filt:eqn:2} is a \emph{standard $\ZZ$-filtration}, if it is generated as a $\overline{\KK}[z]$-module in degree $m = 1$.

\subsection*{Projective normality and test configurations}  Consider now, a very ample line bundle $L$ on a projective variety $X$.  Every very ample test configuration $(\mathcal{X},\mathcal{L})$ induces a bounded $\ZZ$-filtration of the graded algebra
\begin{equation}\label{ZZ:filt:eqn4}
R(X,L) := \bigoplus_{m \geq 0} \H^0(X,L^{\otimes m}) \text{.}
\end{equation}
More precisely, as in \cite[Equation (2.1)]{Boucksom-Hisamoto-Jonsson:2016}, a $\ZZ$-filtration on $R(X,L)$ is obtained by letting $\mathcal{F}^{t} \H^0(X,L^{\otimes m})$, for $t \in \ZZ$,  be the image of the $t$-weight part $\H^0(\mathcal{X}, m \mathcal{L})_{t}$ of $\H^0(\mathcal{X}, m \mathcal{L})$ under the injective restriction map
\begin{equation}\label{ZZ:filt:eqn5}
\H^0(\mathcal{X},m\mathcal{L}) \rightarrow \H^0(\mathcal{X}_{1}, m \mathcal{L}|_{\mathcal{X}_{1}}) = \H^0(X,L^{\otimes m}) \text{.}
\end{equation}
As explained in \cite[Proof of Proposition 2.21]{Boucksom-Hisamoto-Jonsson:2016}, such $\ZZ$-filtrations are saturated.  Thus, they are bounded and saturated.

The manner in which very ample test configurations relate to Rees algebras is made precise in the following way.  It makes more precise, for very ample test configurations \cite[Proposition 2.15]{Boucksom-Hisamoto-Jonsson:2016}.

\begin{theorem}[Compare with {\cite[Proposition 2.15]{Boucksom-Hisamoto-Jonsson:2016}}]\label{projective:normal:test:configs:Rees:algebras}
Assume that $L$ is a very ample line bundle on a projective variety $X$ and that $(X,L)$ is projectively normal.    Then, there is a bijective correspondence between very ample test configurations for $(X,L)$ and standard saturated $\ZZ$-filtrations of the section ring $R(X,L)$.
\end{theorem}

\begin{proof}
 Let $n = h^0(X,L) - 1$.  By assumption, the embedding 
$$
X \hookrightarrow \PP^n
$$
that is afforded by the complete linear series $|L|$ is projectively normal.  Thus, the section ring 
$R(X,L)$ is a standard $\overline{\KK}$-algebra.

Let $(\mathcal{X},\mathcal{L})$ be a very ample test configuration for $(X,L)$.  Then, by the description of the corresponding filtration given in \eqref{ZZ:filt:eqn5}, it follows that the corresponding bigraded algebra \eqref{ZZ:filt:eqn3} is generated in degree $m = 1$ as a $\overline{\KK}[z]$-algebra.  Indeed, the reason is that the section ring \eqref{ZZ:filt:eqn4} is generated in degree $m = 1$, as a $\overline{\KK}$-algebra and, moreover, the filtration in degree $m = 1$ is separated and exhaustive.  Finally, the proof of \cite[Proposition 2.21]{Boucksom-Hisamoto-Jonsson:2016} shows, in a more general context, that the filtrations obtained via \eqref{ZZ:filt:eqn5} are indeed saturated.  

Conversely, assume given a finitely generated saturated standard $\ZZ$-filtration of the form \eqref{ZZ:filt:eqn3}.  Then, it is generated, as a $\overline{\KK}[z]$-algebra, in degree $m = 1$.  The very ample test configuration $(\mathcal{X},\mathcal{L})$ is obtained via the relative projective spectrum over $\AA^1$.
\end{proof}

\begin{remark}\label{remark:projective:normal:test:configs:Rees:algebras}  Continuing within the context of Theorem  \ref{projective:normal:test:configs:Rees:algebras}, it is also important to note that although $(X,L)$ is projectively normal, the special fibre $(\mathcal{X}_0,\mathcal{L}|_{\mathcal{X}_0})$ need not be projectively normal in general.  As one example, consider the closed embedding
$$
\PP^1 \times \G_m \hookrightarrow \PP^3 \times \G_m
$$
that is defined by the weight vector
$$
\mathbf{c} = (0,1,3,4) \in \ZZ^4.
$$
\end{remark}

\begin{remark}\label{very:ample:test:config:saturated}
Note that the conclusion of Theorem \ref{vanishing:number:normalized:chow:weights:cor} applies to the class of filtrations that are produced by Theorem \ref{projective:normal:test:configs:Rees:algebras}.
\end{remark}

\section{Filtrations and discrete measures determined by orders of vanishing}\label{real:valuations:discrete:measures}
We now assume that $X$ is a normal $d$-dimensional projective variety.  Let $ \overline{\KK}(X)$ be its function field.  We study filtrations of linear series which are induced by divisors over $X$.  See Theorem \ref{volume:constant:expectation:theorem}.
Our conventions about divisorial (or algebraic) valuations on $X$ are similar to those of \cite[Definition 2.24]{Kollar:Mori:1998}.  

By a slight abuse of perspective, we interchangeably identify prime divisors over $X$ and the divisorial valuations to which they determine.  In particular, if $E$ is a prime divisor over $X$, then $E$ is a nonzero, irreducible, reduced and effective Cartier divisor on some normal proper birational model 
of $X$. We also say that $E$ is a \emph{prime divisor} over $X$.

Let $L$ be a big line bundle on $X$, fix $m \geq 0$ and let $E$ be a prime divisor over $X$ and supported on some normal proper model 
$$\pi \colon X' \rightarrow X\text{.}$$   
There is a well-defined function
\begin{equation}\label{flag:eqn12}
\operatorname{ord}(\cdot) \colon \H^0(X, L^{\otimes m}) \rightarrow [0,\infty] \text{.}
\end{equation}
It is achieved via orders of vanishing along $E$.  It 
 determines a filtration $\mathcal{F}^\bullet_{\operatorname{ord}(\cdot)}$ of $R(L)$.  This filtration has the property that
 \begin{align}\label{divisor:over:number}
\mathcal{F}_{\operatorname{ord}(\cdot)}^t \H^0(X, L^{\otimes m}) & := \{ \sigma \in \H^0(X, L^{\otimes m}) : \operatorname{ord}(\sigma) \geq t \}, \nonumber \\
& = \H^0(X', \pi^* L^{\otimes m} \otimes \Osh_{X'}(- \lceil t E \rceil))  \text{.}
\end{align}
for each $m \geq 0$ and each $t \in \RR$.  

In what follows, let
$$
\beta_E(L) := \int_0^\infty \frac{ \Vol_{X'}(\pi^*L - t E)}{\Vol_X(L) } \mathrm{d}t \text{.}
$$

Let $\mathcal{F}_{\operatorname{ord}(\cdot)}^\bullet$ be the filtration of $R(L)$ induced by $E$.  As in \cite[Lemma 2.22]{Boucksom:Kuronya:Maclean:Szemberg:2015}, for each $t < a_{\max}(||L||)$, put
$$ \operatorname{Vol}(L,v\geq t) := \lim_{m\to \infty} \frac{d!}{m^d} \dim \mathcal{F}_{\operatorname{ord}(\cdot)}^{mt} \H^0(X, L^{\otimes m}).$$

Proposition \ref{Concave:transform:theorem1} below is a consequence of Proposition \ref{expectation:concave:transform}.  We include it here, to place emphasis on the class of filtrations that arise from prime divisors $E$ over $X$.

\begin{proposition}\label{Concave:transform:theorem1}
Let $X$ be a normal projective variety.  Fix  a prime divisor $E$
over $X$ and supported on a normal proper model 
$$\pi \colon X' \rightarrow X\text{.}$$
Let $L$ be a big line bundle on $X$.  
Let $G_{\mathcal{F}^\bullet_{\operatorname{ord}(\cdot)}}$ denote the concave transform determined by $L$ and $\operatorname{ord}(\cdot)$, the divisorial valuation that is determined by $E$.  Here, $\mathcal{F}^\bullet_{\operatorname{ord}(\cdot)}$ is the filtration that is described in \eqref{divisor:over:number}.  Finally, let $\lambda$ be the restriction of Lebesgue measure to the interior of the Okounkov body of $L$.  Then, with these notations and hypothesis, the limit expectation $\mathbb{E}(\nu)$, of the measures $\nu_m$, can be described as
$$ 
\mathbb{E}(\nu) = \int_0^{a_{\max}(||L||)} \frac{\operatorname{Vol}(L,v \geq t)}{\operatorname{Vol}(L)} \mathrm{d}t = \frac{d!}{\operatorname{Vol}(L)} \int_0^{a_{\max}(||L||)} t \cdot \mathrm{d}((G_{\mathcal{F}^\bullet_{\operatorname{ord}(\cdot)}})_* \lambda).
$$
\end{proposition}

\begin{proof}
The measure $\nu$ is
\begin{equation}
\nu = \nu(t) = \lim_{m \to \infty} \nu_m.
\end{equation}
Thus, using the relations \eqref{expectation:concave:transform:eqn} and \eqref{mu:expectation:integral:volume:function}, which we obtained in Proposition \ref{expectation:concave:transform}, it follows that
$$ \mathbb{E}(\nu) = \int_0^{a_{\max}(||L||)} \frac{\operatorname{Vol}(L,v \geq t)}{\operatorname{Vol}(L)} \mathrm{d}t = \frac{d!}{\operatorname{Vol}(L)} \int_0^{a_{\max}(||L||)} t \cdot \mathrm{d}((G_{\mathcal{F}^\bullet_{\operatorname{ord}(\cdot)}})_*\lambda)$$
as desired.
\end{proof}

In Theorem \ref{volume:constant:expectation:theorem}, we give a collection of equivalent descriptions for the expected orders of vanishing constants.  Before stating that result, recall, as in \cite[Definition 4.1]{Codogni:Patakfalvi:2021}, the concept of \emph{$m$-basis type divisor} with respect to the big line bundle $L$ on $X$.   Specifically, a nonzero effective $\QQ$-Cartier divisor $D$ is said to be of \emph{$m$-basis type with respect to $L$}, if there exists Cartier divisors 
$$
D_i \in |L^{\otimes m}| \text{,}
$$
for $i = 0,\dots, n_m - 1$, 
for which their corresponding sections
$$
\sigma_i \in \H^0(X,L^{\otimes m})
$$
are $\overline{\KK}$-linearly independent and for which the given $\QQ$-Cartier divisor $D$ may be expressed as
$$
D = \frac{1}{m h^0(X,L^{\otimes m})} \sum_{i = 0}^{n_m} D_i \text{.}
$$
To indicate the dependence on $L$, we say that such a divisor $D$ is an \emph{$L$-$m$-basis type divisor}.

Theorem \ref{volume:constant:expectation:theorem} is formulated in the following way.

\begin{theorem}\label{volume:constant:expectation:theorem}
Let $X$ be a $d$-dimensional normal projective variety.  Suppose that $L$ is a big 
line bundle on $X$. 
The following assertions hold true.
\begin{enumerate}
\item{
Suppose that $E$ is a prime Cartier divisor over $X$ and supported on a normal proper model 
$$\pi \colon X' \rightarrow X\text{.}$$  
Let $\mathcal{F}^\bullet$ be the induced filtration on the section ring $R(L)$.  Then
the asymptotic volume constant $\beta_E(L)$ equals the limit expectation $\mathbb{E}(\nu)$
\begin{equation}\label{beta:expectation:1}
\beta_E(L) = \mathbb{E}(\nu) = \int_0^{a_{\max}(||L||)} \frac{ t \cdot \operatorname{Vol}_{X'|E}(\pi^*L-tE)}{\operatorname{Vol}(L) / d} \mathrm{d} t  \text{.} 
\end{equation}
Further, 
\begin{equation}\label{beta:expectation:basis:type}
\beta_E(L) = \sup_{\substack{\text{$L$-$m$-basis } \\ \text{ type divisors $D$} } } \operatorname{ord}_E(D) \text{.}
\end{equation}
}
\item{Suppose that $E$ is the exceptional divisor of a blowing-up morphism 
$$\pi \colon X' \rightarrow X$$ 
with center a subscheme $Z \subsetneq X$.  Let $\mathcal{F}^\bullet$ be the induced filtration on $R(L)$.  
Let $G_{\mathcal{F}^\bullet}$ denote the concave transform determined by $L$ and $v$ and denote by $\lambda$ the restriction of the Lebesgue measure to the interior of the Okounkov body $\Delta(L)$.  Then, with these notations and hypothesis, the asymptotic volume constant $\beta_Z(L)$ can be described as
\begin{equation}\label{beta:expectation:2}
\beta_Z(L)  = \mathbb{E}(\nu)  = \frac{d!}{\operatorname{Vol}(L)} \int_0^{a_{\max}(||L||)} t \cdot \mathrm{d}((G_{\mathcal{F}^\bullet_{\operatorname{ord}(\cdot)}})_*\lambda) \text{.}
\end{equation}
Further, when $L$ is assumed to be very ample,
the asymptotic volume constant $\beta_Z(L)$ can be described as a normalized Chow weight
\begin{equation}\label{beta:expectation:3}
\beta_Z(L) = \mathbb{E}(\nu) = \frac{e_X(\mathbf{c})}{(d+1) (\operatorname{deg}_L X)}. 
\end{equation}
Here $e_X(\mathbf{c})$ is the Chow weight of $X$ in $\PP^n_{\overline{\KK}}$ with respect to an \emph{inflectionary embedding}, i.e., with respect to a filtered basis for $\H^0(X,L)$ and 
$$\mathbf{c} = (a_0(L),\dots,a_n(L))$$ is the weight vector determined by the vanishing numbers of $L$ with respect to the filtration $\mathcal{F}^\bullet$.
}
\end{enumerate}
\end{theorem}

\begin{proof}
[Proof of Theorem  \ref{volume:constant:expectation:theorem} and Theorem  \ref{volume:constant:expectation:theorem:intro}]
Within the context of (a), as established in \cite[Theorem 1.2]{Grieve:MVT:2019}, see also  \cite[Theorem 2.24]{Boucksom:Kuronya:Maclean:Szemberg:2015}, the limit measure $\nu$ can be described as
\begin{equation}\label{restricted:volume:measure} 
\nu = \frac{\operatorname{Vol}_{X'|E}(\pi^*L-tE)}{\operatorname{Vol}(L)/d} \mathrm{d} t \text{.}
\end{equation}
Further 
$$
a_{\max}(||L||) = \sup \left\{t > 0 : \pi^*L - t E \text{ is big}\right\}\text{.}
$$
The equation
\eqref{beta:expectation:1} 
follows from Proposition \ref{Concave:transform:theorem1}, the description of the measure $\nu$ given in \eqref{restricted:volume:measure}, together with the fact that, by normality, 
$$\operatorname{Vol}(L,v \geq t) = \operatorname{Vol}(\pi^* L - t E).$$  To complete the proof of (a), it remains to establish \eqref{beta:expectation:basis:type}.  This amounts to establishing the equality
$$
\sup_{\substack{\text{$L$-$m$-basis} \\ \text{
type divisors $D$}} } \operatorname{ord}_E(D) = \frac{1}{m h^0(X,L^{\otimes m})} \sum_{t} t \dim\left( \mathcal{F}^t \H^0(X,L^{\otimes m}) / \mathcal{F}^{t + 1} \H^0(X,L^{\otimes m}) \right) \text{.}
$$
To this end, we argue as in \cite[proof of Lemma 2.2]{Fujita:Odaka:2018}.  Consider an $L$-$m$-basis type divisor
$$
D = \frac{1}{m h^0(X,L^{\otimes m})} \sum_{j = 0}^{n_m} D_j \text{,}
$$
with corresponding basis 
$$
\sigma_0, \dots, \sigma_{n_m} \in \H^0(X,L^{\otimes m}) \text{.}
$$
Then, upon rearranging the order of the $\sigma_j$, if necessary, there exists a decreasing sequence of integers
$$
h^0(X,L^{\otimes m}) \geq b_0 \geq b_1 \geq \dots \geq 0 
$$
such that if
$$
b_t \geq j > b_{t + 1} \text{,}
$$
then
$$
\operatorname{ord}_E(\sigma_j) = t\text{.}
$$
The proof is complete, upon noting that
$$
b_t \leq h^0(X', \pi^* L^{\otimes m} \otimes \Osh_{X'}(- t E))\text{;
}
$$
with equality holding when the basis $\sigma_0,\dots,\sigma_{n_m}$ is compatible with the orders of vanishing filtration $\mathcal{F}^\bullet R_m$.

For (b), Equation \eqref{beta:expectation:2} is implied by Proposition \ref{expectation:concave:transform}.  Finally, \eqref{beta:expectation:3} follows from \eqref{beta:expectation:2} combined with Theorem \ref{vanishing:number:normalized:chow:weights:cor}.  
\end{proof}

\section{Examples}\label{concave:transform:examples}

In this section, we illustrate briefly, our results that are within the context of examples from \cite{Boucksom:Kuronya:Maclean:Szemberg:2015} and \cite{Fujita:2017}.  Additional examples, within the setting of toric varieties, may be found in \cite{Grieve:toric:gcd:2019}.  Finally, we mention that for the case of Fano manifolds, there is another method for calculation of these quantities that uses a minimal model program with scaling.  This approach may be extracted from \cite[Section 3.3]{Fujita:2016a} and \cite[Section 2]{Fujita:2017}.  We give a small discussion about that topic in Example \ref{DF:Example}.  

\begin{example}
If $L$ is an ample line bundle on a nonsingular curve $X$ and if $p \in X$, then the corresponding  Okounkov body $\Delta(L)$ is the interval
$$
\Delta(L) = [0,\deg L] \subseteq \RR.
$$
If $v = \operatorname{ord}_q(\cdot)$, for $q \in X$, then the concave transform 
$$
G_{\mathcal{F}^\bullet_v} \colon [0,\deg L] \rightarrow \RR
$$
is defined by
$$
G_{\mathcal{F}^\bullet_v} =
\begin{cases}
 G_{\mathcal{F}^\bullet_v}(t) = t, & \text{when $q = p$; and} \\
 G_{\mathcal{F}^\bullet_v}(t) = \operatorname{deg} L - t, & \text{ otherwise,}
\end{cases}
$$
see \cite[page 829]{Boucksom:Kuronya:Maclean:Szemberg:2015}.
It follows that
$$\mathbb{E}(\nu) = \beta_q(L) =\frac{(1)!}{\operatorname{Vol}(L)} \int_0^{\operatorname{deg} L} t \cdot \mathrm{d}((  G_{\mathcal{F}^\bullet_v})_*\lambda) = \frac{\operatorname{deg} L}{2} \text{.}
$$
\end{example}

\begin{example}

Suppose that $X = \PP^2_{\overline{\KK}}$ and $L = \Osh_{\PP^2_{\overline{\KK}}}(1)$.  Consider the flag defined by a point $p$ on a line $\ell$.  The Okounkov body $\Delta(L)$ is then the simplex
$$
\Delta(L) = \{ (x,y) \in \RR^2_{+} : x + y \leq 1\}.
$$
Further, let $v = \ord_z(\cdot)$, for a point $z \in X$.  Then
$$
G_{\mathcal{F}^\bullet_v} = 
\begin{cases}
G_{\mathcal{F}^\bullet_v}(x,y) = x + y & \text{when $z = p$; and } \\
G_{\mathcal{F}^\bullet_v}(x,y) = 1 - x \text{,} &  \text{otherwise,}
\end{cases}
$$
see \cite[page 829]{Boucksom:Kuronya:Maclean:Szemberg:2015}.  Thus 
$$
\mathbb{E}(\nu) = 
\frac{(2)!}{\operatorname{Vol}(L)} \int_0^1 t \cdot \mathrm{d}((G_{\mathcal{F}^\bullet_v})_*\lambda) = \frac{(2)!}{\operatorname{Vol}(L)} \iint_{\Delta(L)^\circ} t \circ G_{\mathcal{F}^\bullet_v} \mathrm{d}\lambda = \frac{2}{3}.
$$
\end{example}

\begin{example}
Consider the $2$-tuple embedding
$ \phi \colon \PP^1_{s,t} \rightarrow \PP^2_{x,y,z}$, which is
defined by 
$$ [s,t] \mapsto [s^2 : st : t^2].$$
Put 
$ L = \Osh_{\PP^1}(2) = \phi^* \Osh_{\PP^2}(1) $
and consider the point $p = [1:0] \in \PP^1$.
The sections
$$ \sigma_2 = s^2, \sigma_1 = st, \sigma_0 = t^2 \in \H^0(\PP^1,L)$$
vanish, respectively, to orders
$$ a_2(L) = 2,  a_1(L) = 1 \text{ and } a_0(L) = 0 $$
at $p$.  Put
$ \mathbf{c} = (a_0, a_1, a_2) $
and let $X$ denote the image of $\phi$.  Then $X$ is the degree $2$ plane curve with defining equation given by
$$ xz - y^2 = 0.$$
Thus, as explained in \cite[page 1304]{Evertse:Ferretti:2002} for example, the Chow weight $e_{\PP^1}(\mathbf{c})$ of $\PP^1$ in $\PP^2$ with respect to the embedding $\phi$ and the weights $\mathbf{c}$ is given by
$$e_{\PP^1}(\mathbf{c}) = 2(3) - \min \{ (0,1,2)\cdot (1,0,1), (0,1,2)\cdot(0,2,0) \} = 4. $$
The normalized Chow weight is thus
$$\frac{e_{\PP^1}(\mathbf{c})}{(2)(2)} = \frac{4}{4} = 1;$$
Note also that
$$ \beta_p(L) = \int_0^2 \frac{2 - t}{2} \mathrm{d} t  = 1 = \frac{\operatorname{deg} L}{2}.$$
\end{example}

\begin{example}\label{DF:Example}  
Suppose that $(X,-\K_X)$ is a $d$-dimensional nonsingular Fano variety and that 
$$Z \subsetneq X$$ 
is a codimension $r$ nonsingular subvariety.  Let 
$$\pi \colon X' = \operatorname{Bl}_Z(X) \rightarrow X$$ 
be the blowing-up of $X$ along $Z$, with exceptional divisor $E$.  Recall, that $Z$ is called \emph{dreamy} in case that the bigraded algebra
$$
\bigoplus_{m,t \geq 0} \H^0(X',\pi^*\K_X^{\otimes -m} \otimes \Osh_{X'}(- t E))
$$
is finitely generated.  

For such dreamy nonsingular subvarieties $Z \subsetneq X$, there is a construction of \emph{basic test configuration} $(\mathcal{B},\mathcal{L})$ for $(X,-\K_X)$, with respect to $Z$, and depending on a choice of sufficiently divisible positive integer $m_0 \gg 0$.  This is described in \cite[Section 3.2]{Fujita:2017}.  Further, as noted in \cite[Proposition 3.9 and Remark 3.11]{Fujita:2017}, the Donaldson-Futaki invariant of such a basic test configuration is described as 
\begin{equation}\label{basic:test:config:eqn0}
\operatorname{DF}(\mathcal{B},\mathcal{L}) = \frac{ m_0^{2d} \Vol(-\K_X)^2}{2 (d!)^2} \left(r - \beta_Z(-\K_X) \right) \text{.}
\end{equation}
On the other hand, we know from \eqref{epsilon:beta:eqn} that
\begin{equation}\label{basic:test:config:eqn1}
\beta_Z(-\K_X) \geq \frac{r}{d+1} \epsilon(-\K_X;Z).
\end{equation}
It follows, upon combining \eqref{basic:test:config:eqn0} and \eqref{basic:test:config:eqn1}, having first fixed a sufficiently divisible positive integer $m_0 \gg 0$, that
$$
\operatorname{DF}(X,-\K_X;Z) \leq \frac{r m_0^{2d} \Vol_X(-\K_X)^2}{2(d!)^2} \left( 1 - \frac{1}{d+1} \epsilon(-\K_X;Z) \right) \text{.}
$$

Let us now outline how the quantity \eqref{basic:test:config:eqn0} may be computed following \cite{Fujita:2016a} and \cite{Fujita:2017}, via a Minimal Model Program with scaling.  
Let
$$
\gamma = \gamma(Z) := \sup \{ t \in \RR_{\geq 0} : - \pi^*\K_X - t E \text{ is pseudoeffective} \}
$$ 
be the pseudoeffective threshold and let $\{ (\gamma_i, \pi_i \colon X' \dashrightarrow X) \}_{1 \leq i \leq \ell}$ be the \emph{ample model sequence} \cite[Definition 5]{Fujita:2016a}.  Recall, that the ample model sequence admits an interpretation via an application of a Minimal Model Program with scaling \cite[Section 8]{Fujita:2016a}.  Note that the number of terms which arises in this ample sequence gives a conceptual measure for the extent to which the pseudoeffective threshold $\gamma(Z)$ differs from the Seshadri constant $\epsilon(-\K_X;Z)$.  For example, $\ell = 1$ when $\gamma(Z) = \epsilon(-\K_X;Z)$.

Set $E_i := (\pi_i)_*E$, for $i = 1,\dots,\ell$.  Fix a sufficiently divisible positive integer $m_0 \gg 0$ that has the property that the graded $\overline{\KK}$-algebra
$$
\bigoplus_{\substack{m \geq 0 \\ 0 \leq t \leq m m_0 \gamma}} \H^0(X',\pi^*\K_X^{\otimes -mm_0}  \otimes \Osh_{X'}(- tE))
$$
is generated by 
$$
\bigoplus_{0 \leq t \leq m_0 \gamma} \H^0(X',\pi^*\K_X^{\otimes -m_0}  \otimes \Osh_{X'}(- t E)) \text{.}
$$
Then, as in \cite[Proposition 3.9]{Fujita:2017}, the Donaldson-Futaki invariant \eqref{basic:test:config:eqn0} of the corresponding basic test configuration $(\mathcal{B},\mathcal{L})$ can be described as
$$
\operatorname{DF}(\mathcal{B},\mathcal{L}) = \frac{m_0^{2d}\Vol_X(-\K_X) }{2(d!)^2 } \eta(Z)
$$
where
\begin{align*}
\eta(Z) & =   \sum_{i=1}^\ell \int_{\gamma_{i-1}}^{\gamma_i} \left( d (r-t) \left( -\K_{X_i} + (r-1-t)E_i \right)^{d-1} \cdot E_i \right) \mathrm{d}t  \\
& = r \cdot \Vol_X(-\K_X) - \int_0^\gamma \Vol_{X'}(-\pi^*\K_X - t E) \mathrm{d}t \\
& \leq r \cdot \Vol_X(-\K_X)\left(1 - \frac{\epsilon(-\K_X;Z) }{d + 1} \right) \text{.}
\end{align*}
\end{example}

\begin{example}\label{codim:seshadri:lower:bound}
Here, we expand upon the techniques from \cite[Section 4]{Zhu:2020} and obtain the inequality \eqref{epsilon:beta:eqn}, via different techniques \cite[Theorem 4.2]{Heier:Levin:2017}.   Let $X$ be a normal Cohen-Macaulay projective $\overline{\KK}$-variety and let $L$ be an ample line bundle on $X$.    Put
$
d := \dim X \text{.}
$
Consider a codimension $r$, global complete intersection $Z \subsetneq X$ in the linear system $|L|$.  Denote by $\epsilon(L;Z)$ its \emph{Seshadri constant}.  It is defined by the condition that
$$
\epsilon(L;Z) := \sup \{t \in \RR_{\geq 0} : \pi^*L - t E \text{ is nef} \} \text{.}
$$
Here, $E$ is the exceptional divisor of the blowing-up morphism 
$$\pi \colon X' = \operatorname{Bl}_Z(X) \rightarrow X\text{.}$$    
There is the evident inequality
\begin{equation}\label{seshadri:inequality}
\frac{1}{\Vol(L)} \int_0^{\infty} \Vol(\pi^* L - t E) \mathrm{d}t \geq \frac{1}{\Vol(L)} \int_0^{\epsilon(L;Z)} \Vol(\pi^*L - t E) \mathrm{d} t \text{.}
\end{equation}
Henceforth, we want to obtain an explicit lower bound for the righthand side of the inequality \eqref{seshadri:inequality}.

Since 
$$
\Vol(\pi^* L - t E) = (\pi^* L - t E)^d \text{,}
$$
when 
$$t \in (0,\epsilon(L;Z))\text{,}$$ 
it is important to simplify the $d$-fold intersection product
$$
(\pi^*L - t E)^d = \sum_{i=0}^d (-1)^i \binom{d}{i} t^i \left(\pi^* L^{d-i} \cdot E^i \right)\text{.}
$$
To this end, observe that if 
$$M = \pi^*L - E\text{,}$$ 
then
\begin{align*}
\pi^*L^d - (\pi^* L - t E)^d & = t E \cdot \left(  \sum_{i=0}^{d-1} \pi^* L^i \left((1-t) \pi^* L + t M  \right)^{d-1-i} \right) \\
& = \pi^*L^d \left(\sum_{i=0}^{d-r} \binom{d-1-i}{r-1}(1-t)^{d-r-i} \cdot  t^r \right) \text{.}
\end{align*}

The reason is that $X$ is Cohen-Macaulay and $Z$ is a complete intersection in the complete linear system $|L|$.  Thus 
$$
\pi^* L^{d-i} \cdot (M^{i-1} \cdot E) = \begin{cases}
L^d & \text{ if } i = r \\
0 & \text{ if } i \not = r \text{.}
\end{cases}
$$
Consider now, the \emph{incomplete beta functions}
$$
B_{t_0}(d-r-i+1, r+1) := \int_0^{t_0}(1-t)^{d-r-i} \cdot t^r \mathrm{d}t \text{,}
$$
for $
i = 0,\dots,d-r$ and $t_0 \in \RR_{\geq 0}$.  

Then, since
$$
\frac{(\pi^*L - tE)^d}{L^d} = 1 - \left( \sum_{i=0}^{d-r} \binom{d-1-i}{r-1}(1-t)^{d-r-i} \cdot t^r \right) \text{,}
$$
when $t \in (0,\epsilon(L;Z))$, it follows that
\begin{align*}
\beta_Z(L) & := \frac{\int_0^\infty \Vol(\pi^*L - t E) \mathrm{d}t}{ \Vol(L) } \\
& \geq \frac{\int_0^{\epsilon(L;Z)} \Vol(\pi^*L - t E) \mathrm{d}t}{\Vol(L)} \\
& = \epsilon(L;Z) - \sum_{i=0}^{d-r} \binom{d-1-i}{r-1} B_{\epsilon(L;Z)}(d-r-i+1, r+1) \text{.}
\end{align*}
\end{example}

\section{Approximation constants for divisorial valuations}\label{approx:constants}

The purpose of this section, is to define and study approximation constants from the viewpoint of local Weil functions.  The main result is Theorem \ref{Roth:Constants:Thm}.  It gives a logarithmic form of \cite[Theorem 5.1]{McKinnon-Roth} and supplements \cite{Ru:Wang:2016} and \cite{Ru:Wang:2021}.  As in \cite{Grieve:Function:Fields}, the Subspace Theorem of Schmidt is used in place of the theorem of Faltings and W\"{u}stholz \cite{Faltings:Wustholz}.  In Section \ref{Roth:constants:delta:invariant}, we explain its relation to the $\mathrm{K}$-stability $\delta$-invariant.  (See Corollary \ref{K:stab:delta:invariant:cor}.)

Fix a base number field $\KK$.  Let $M_\KK$ denote its set of places.  Let $\overline{\KK}$ be a fixed algebraic closure of $\KK$.  In what follows, our conventions about absolute values are consistent with those of \cite[Section 1.4]{Bombieri:Gubler}.  In particular, they are normalized so that the product formula holds true with absolute value equal to one.  We also employ the theory of local Weil and height functions.  

Again, our conventions extend those of \cite[Sections 2.2 and 2.3]{Bombieri:Gubler}.  They are consistent with those of \cite{Grieve:Divisorial:Instab:Vojta} and \cite{Grieve:points:bounded:degree}.  For example, our conventions for local Weil functions defined by Cartiers divisors, allows for fields of definition which are finite extensions of the given base field.  They are normalized with respect to the base number field $\KK$.   We refer to \cite[Section 3]{Grieve:Divisorial:Instab:Vojta} or \cite[Section 2]{Grieve:points:bounded:degree}, for example, for further details and omit further discussion as to their definitions here.

Let $L$ be an ample line bundle on a geometrically irreducible and geometrically normal projective variety $X$.  We assume that $(X,L)$ is defined over $\KK$.  In what follows, $h_L(\cdot)$ denotes its logarithmic height function.  Suppose that 
$
\operatorname{ord}_E(\cdot) 
$
is a divisorial valuation on $X$ and having field of definition $\FF / \KK$ a finite extension of $\KK$ and having the property that $\KK \subseteq \FF \subseteq \overline{\KK}$.  

Then $\operatorname{ord}_E(\cdot)$ is the valuation on  $\FF(X_{\FF})$ that is obtained via orders of vanishing along $E$ a geometrically irreducible, reduced and effective Cartier divisor on some normal proper model of $X_{\FF}$ and defined over $\FF$.  
Fixing such a model 
$$\pi \colon X' \rightarrow X_{\FF}\text{,}$$ 
with $E \subseteq X'$, the corresponding filtration $\mathcal{F}^\bullet R$, of the section ring $R$, is described as
\begin{align*}
\mathcal{F}^t R_m & = \H^0\left(X_{\FF}, L_{\FF}^{\otimes m} \otimes \pi_* (\Osh_{X'}(- \lceil t E \rceil ))\right) \\
&= \H^0\left(X', \pi^* L^{\otimes m}_{\FF} \otimes \Osh_{X'}(- \lceil t E \rceil)\right)
\end{align*}
for all 
$m \geq 0$ 
and all 
$t \in \ZZ \text{.}$  

Inside of 
$$X_{\overline{\KK}} = X \times_{\spec \KK} \spec \overline{\KK}\text{,}$$ 
the Zariski closure of the center of $E$ will be denoted by $\operatorname{center}_X(E)$.  Moreover, given a place $v \in M_{\KK}$, let $\lambda_v(\cdot,E)$ be the corresponding local Weil function.  It is normalized with respect to the base number field $\KK$.  Its domain is the set of those $\FF$-rational points $x \in X(\FF)$ which are not contained in $\operatorname{center}_X(E)$.   We adopt similar notation for  local Weil functions determined by global sections of line bundles.

In Definition \ref{divisorial:val:approx:contant}, we give a logarithmic formulation of approximation constants for divisorial valuations.  It should be compared  with {\cite[Definition 2.8, 2.9]{McKinnon-Roth}} and {\cite[Section 3.5]{Grieve:Function:Fields}}.

\begin{defn}\label{divisorial:val:approx:contant}
Let $L$ be an ample line bundle on a geometrically irreducible and geometrically normal projective variety $X$.  Let 
$\operatorname{ord}_E(\cdot)$  
be a divisorial valuation on $X$ determined by a geometrically irreducible and reduced effective Cartier divisor $E$, over $X$, and having field of definition $\FF / \KK$ a finite extension of $\KK$ and having the property that $\KK \subseteq \FF \subseteq \overline{\KK}$.  Let $v \in M_{\KK}$ be a fixed place of $\KK$ and extended to $\overline{\KK}$.
Let 
$$\{x_i\} \subseteq X(\KK) \setminus \operatorname{center}_X(E)(\KK)$$ 
be an infinite sequence of $\KK$-rational points.  We distinguish amongst two cases.  In the first case, 
$$\lambda_v(x_i,E) \to \infty$$ 
as $i \to \infty$.  In that case, define $\alpha_E(\{x_i\},L)$ to be the infimum of those real numbers 
$$\gamma \in \RR\text{,}$$ which have the property that
$$h_L(x_i) \leq \gamma \lambda_v(x_i,E) + \mathrm{O}(1) \text{,}$$ 
as $i \to \infty$.

In the second case, 
$$\lambda_v(x_i,E) \not \to \infty $$ as $i \to \infty$.  When this happens, we put 
$$\alpha_E(\{x_i\},L) = \infty \text{.}$$
\end{defn}

\begin{remark}
Recall that $\lambda_v(\cdot,E)$ gives a measure of the negative of the $v$-adic logarithmic distance to $E$.  Indeed, this is the viewpoint of Silverman \cite[Section 2]{Silverman:1987}.  Thus, $\lambda_v(x_i,E)$ is large when points are $v$-adically close to $\operatorname{center}_X(E)$.  This explains the intuition behind the property that an infinite sequence of $\KK$-rational points
$$
\{x_i\} \subseteq X(\KK) \setminus \operatorname{center}_X(E)(\KK)
$$
satisfies the condition that
$$
\lambda_v(x_i,E) \to \infty \text{,}
$$
as $i \to \infty$.
\end{remark}

In our present context, Theorem \ref{Roth:Constants:Thm} is our analogue of \cite[Theorem 5.1]{McKinnon-Roth}, which is due to McKinnon and Roth.  The proof is similar, although, as in \cite{Grieve:Function:Fields}, the Subspace Theorem of Schmidt can be used in place of the approximation theorem of Faltings and W\"{u}stholz \cite{Faltings:Wustholz}.  Theorem \ref{Roth:Constants:Thm}, below, complements the work of Ru and Wang (see \cite{Ru:Wang:2016} and \cite{Ru:Wang:2021}) and Heier and Levin \cite{Heier:Levin:2017}.

\begin{theorem}\label{Roth:Constants:Thm}  Let $L$ be an ample line bundle on a geometrically irreducible and geometrically normal projective variety $X$ and defined over a number field $\KK$.  Let 
$S \subseteq M_{\KK}$ 
be a finite subset.  Fix a collection of positive real numbers $\{R_v\}_{v \in S}$ and for each $v \in S$, let $E_v$ be a prime divisor over $X$ and having field of definition some finite extension of $\KK$.

If 
$$
\sum_{v \in S} \beta_{E_v}(L) R_v > 1 \text{,}
$$
then there exists a proper Zariski closed subset 
$$W \subsetneq X$$ 
so that the inequalities
$$
\alpha_{E_v}(\{x_i\},L) \geq 1 / R_v
$$
are valid for all infinite sequences of distinct $\KK$-rational points 
$$
\{x_i\} \subseteq X(\KK) \setminus W(\KK)
$$
and at least one place $v \in S$.
\end{theorem}

\begin{proof}
We combine the approach of \cite[Proof of Theorem 5.1]{McKinnon-Roth} and \cite[Proposition 6.2 and Theorem 6.3]{Grieve:Function:Fields}.  In particular, we apply Schmidt's Subspace Theorem, for linear systems, to construct \emph{vanishing sequences}, which are \emph{Diophantine constraints}.  Our overall outline of argument follows \cite[Proof of Theorem 5.1]{McKinnon-Roth} closely.  First of all, as in \cite[Proof of Theorem 5.1]{McKinnon-Roth}, we may and do assume that the set $X(\KK)$ is Zariski dense.

In what follows, to simplify notation, we refrain from explicit mention of base change to appropriate fields of definition.  For example, if $\FF / \KK$ is a finite extension field, with $\KK \subseteq \FF \subseteq \overline{\KK}$, then we simply write $X$ and $L$ in place of  $X_{\FF}$ and $L_{\FF}$, the respective base change of $X$ and $L$ with respect to the base change morphism $\operatorname{Spec} \FF \rightarrow \operatorname{Spec} \KK$.

By assumption
$$
\sum_{v \in S} \beta_{E_v}(L) R_v > 1 \text{.}
$$
For each $v \in S$, let $\mathcal{F}^\bullet_{E_v}$ be the filtration of the section ring $R(L)$ that is induced by $E_v$.  For each $v \in S$, fix a normal projective model
$$
\pi_v \colon X^{(v)} \rightarrow X
$$
which has the property that 
$$
E_v \subset X^{(v)}
$$
is a Cartier divisor.  Moreover, set
$$
a_{\max}(||L||,v) := \sup \{t > 0 : \pi^*_v L - tE_v \text{ is big} \} \text{.}
$$

Then, as in \cite[Lemma 5.5]{McKinnon-Roth}, for each $v \in S$, there exists rational numbers
$$
0 < t_{v,1} < t_{v,2} < \dots < t_{v,q_v} < a_{\max}(||L||,v)
$$
which have the property that if
$$
c_{v,j} = R_v t_{v,j}
$$
for $v \in S$ and $j = 1,\dots,q_v$, then
\begin{equation}\label{key:eqn:0}
\sum_{v \in S} \left( \sum_{j=1}^{q_v} c_{v,j} \left( g_v(t_{v,j}) - g_v(t_{v,j+1}) \right) \right) > 1 \text{.}
\end{equation}
Here, we have put
$$
g_v(t) = \frac{\Vol(\pi^* L - t E_v)}{\Vol(L)} \text{,}
$$
for $t \geq 0$ and $v \in S$.

Observe now that if $t \geq 0$, then
$$
\frac{\dim \mathcal{F}^t_{E_v} \H^0(X,L^{\otimes m})}{\Vol(L)} \to g_v(t)
$$
as $m \to \infty$.  Thus, in light of the inequality \eqref{key:eqn:0}, we may choose a sufficiently large sufficiently divisible integer $m$ so that the inequality
\begin{equation}\label{key:eqn}
1 < \sum_{v \in S} \frac{1}{h^0(X,L^{\otimes m})}\sum_{j=1}^{q_v} c_{v,j} \left(\dim \left(\mathcal{F}_{E_v}^{mt_{v,j}} \H^0(X,L^{\otimes m}) / \mathcal{F}_{E_v}^{mt_{v,j+1}} \H^0(X,L^{\otimes m}) \right) \right)
\end{equation}
is valid and so that
$$
m t_{v,j} \in \ZZ \text{,}
$$
for all $v \in S$ and all $j = 1,\dots,q_v$.

Recall that 
$$
\mathcal{F}_{E_v}^{m t_{v,j}} \H^0(X,L^{\otimes m}) = \H^0(X^{(v)}, \pi_v^* L^{\otimes m} \otimes \Osh_{X^{(v)}}(- m t_{v,j} E_v)) \text{,}
$$
for all $v \in S$ and all $j = 1,\dots,q_v$.  Further, note that
$$
\mathcal{F}_{E_v}^{m t_{v,j+1} } \H^0(X,L^{\otimes m}) \subseteq \mathcal{F}_{E_v}^{m t_{v,j} } \H^0(X,L^{\otimes m}) \text{,}
$$
for $j = 1,\dots, q_v - 1$.

For each $v \in S$, fix a basis $\{s_{v,j,\ell}\}_{\ell \in I_{v,j}}$ for $\mathcal{F}_{E_v}^{m t_{v,j}} \H^0(X,L^{\otimes m})$ which is compatible with the filtration.  Then, for each fixed $v \in S$, the collection of such sections may be extended to a basis $s_{v,0},\dots,s_{v,n_m}$ for $\H^0(X,L^{\otimes m})$ that is compatible with the filtration $\mathcal{F}_{E_v}^\bullet \H^0(X,L^{\otimes m})$.  

We now note that because of inequality \eqref{key:eqn}, Schmidt's Subspace Theorem (for linear systems), see for example \cite[Theorem 2.6]{Ru:Vojta:2016}, \cite[Theorem 3.3]{Grieve:points:bounded:degree} or  \cite[Proposition 2.1]{Grieve:2018:autissier}, implies that the solutions
\begin{equation}\label{soltns:eqn1}
x \in X(\KK) \setminus \bigcup_{\substack{ v \in S \\ j = 0,\dots, n_m}} \operatorname{Supp}(s_{v,j})(\KK)
\end{equation}
to the simultaneous system of Diophantine inequalities
\begin{equation}\label{soltns:systems:eqn2}
\frac{1}{R_v t_{v,j}} \lambda_{v}(x,s_{v,j,\ell}) \geq h_{L^{\otimes m}}(x) + \mathrm{O}(1)\text{,}
\end{equation}
for each $v \in S$, each $j = 1,\dots, q_v$ and each $\ell \in I_{v,j}$ are contained in some proper Zariski closed subset 
$$Z \subsetneq X\text{.}$$

Finally, suppose that the conclusion of Theorem \ref{Roth:Constants:Thm} is false for this choice of $Z$.  Then, there exists an infinite sequence of rational points
\begin{equation}\label{collection:rational:points:eqn}
\{x_i\} \subseteq X(\KK) \setminus Z(\KK)
\end{equation}
which have the property that
$$
\alpha_{E_v}(\{x_i\},L) < \frac{1}{R_v} \text{.}
$$
In particular, there exists an infinite sequence of rational points \eqref{collection:rational:points:eqn}, which has the property that for all sufficiently small $\delta' > 0$ and all $v \in S$ is holds true that 
$$
\left(\frac{1}{R_v} - \delta'  \right) \lambda_{v}(x_i,E_v) - h_L(x_i) \to \infty
$$
as $i \to \infty$.

Now, each 
$$s_{v,j,\ell} \in \mathcal{F}_{E_v}^{m t_{v,j}} \H^0(X,L^{\otimes m})$$ 
vanishes to order $m t_{v,j}$ along $E_v$.  Thus, for all $\delta > 0$ and all sufficiently large $i$, depending on $\delta$, it follows that 
$$
\lambda_{v}(x_i,s_{v,j,\ell}) \geq (m t_{v,j} - \delta) \lambda_{v}(x_i,E_v) \text{.}
$$
But then it also follows that for all sufficiently large $i$, if $v \in S$, $j = 1,\dots,q_v$ and $\ell \in I_{j,v}$ then, the inequality
$$
\frac{1}{m R_v t_{v,j}} \lambda_{v}(x_i,s_{v,j,\ell}) - h_L(x_i) \geq \left(\frac{1}{R_v} - \frac{\delta}{m R_v t_{v,j}} \right) \lambda_{v}(x_i,E_v) - h_L(x_i) 
$$
is valid.

For small enough $\delta > 0$, the righthand side above tends to $\infty$ as $i \to \infty$.  But this is not compatible with the fact that 
$$Z \subsetneq X$$ 
contains all solutions \eqref{soltns:eqn1} to the simultaneous Diophantine system \eqref{soltns:systems:eqn2} above.  Indeed, it follows from the above discussion that some $x_i$ must lie in $Z$.  This is a contradiction.
\end{proof}

\section{Roth constants and the $\K$-stability $\delta$-invariant}\label{Roth:constants:delta:invariant}

In this final section, we mention how Theorems \ref{Roth:Constants:Thm} and \ref{volume:constant:expectation:theorem} intersect with the $\K$-stability $\delta$-invariant.   This is the content of Corollary \ref{K:stab:delta:invariant:cor}.  It is in the direction of Vojta's Main Conjecture (compare with \cite[Theorem 10.1]{McKinnon-Roth} and with the main results from \cite{Grieve:Divisorial:Instab:Vojta}, \cite{Grieve:2018:autissier}, \cite{Ru:Vojta:2016} and \cite{McKinnon:Santriano:2021}). 

In order to formulate Corollary \ref{K:stab:delta:invariant:cor}, we briefly recall the $\K$-stability $\delta$-invariant.  More details can be found, for instance, in \cite[Section 4]{Codogni:Patakfalvi:2021} and \cite[Theorem C]{Blum:Jonnson:2017}.  Assume that $(X,\Delta)$ is a Kawamata log terminal pair, that is defined over $\KK$.  In particular, by our conventions, $X$ is a geometrically irreducible and geometrically normal projective variety and $\Delta$ is an effective $\QQ$-divisor which has the two properties that
$$\lfloor \Delta \rfloor = 0\text{;}$$ and
$$
a(E,X,\Delta) > - 1 \text{,}
$$
for all divisorial valuations 
$ \operatorname{ord}_E(\cdot)$ determined by nonzero prime Cartier divisors $E$ 
over $X_{\overline{\KK}}$ and having field of definition some finite extension field $\FF / \KK$, with $\KK \subseteq \FF \subseteq \overline{\KK}$.

Here, $a(E,X,\Delta)$ is the \emph{discrepancy} of $(X,\Delta)$ with respect to $E$.  
We refer to the text of Koll\'{a}r and Mori \cite[Section 2.3]{Kollar:Mori:1998} for more details about Kawamata log terminal pairs.

Now, fixing an ample line bundle $L$ on $X$, the $\K$-stability $\delta$-invariant is described by the condition that
$$
\delta(X,\Delta;L) := \inf_{\substack{\text{nontrivial divisorial valuations $\operatorname{ord}_E(\cdot)$} \\ \text{ over $X_{\overline{\KK}}$}}} \frac{A(E,X,\Delta)}{ \beta_E(L)} \text{.}
$$
Here
$$
A(E,X,\Delta) := a(E,X,\Delta) + 1
$$
is the \emph{log discrepancy} of $(X,\Delta)$, with respect to $E$, the prime Cartier divisor that corresponds to the divisorial valuation $\operatorname{ord}_E(\cdot)$.

Recall, the special case that $(X,\Delta)$ is a \emph{Fano pair}, in the sense that  the anti-log-canonical divisor $-\K_X - \Delta$ is ample.  Then, $(X,\Delta)$ is \emph{$\K$-semistable} if and only if 
$$\delta(X,\Delta;-\K_X - \Delta) \geq 1\text{.}$$  
On the other hand, $(X,\Delta)$ is \emph{uniformly $\K$-stable} if and only if $\delta(X,\Delta) > 1$.  We refer to \cite[Corollary 4.9]{Codogni:Patakfalvi:2021} for more details.

Returning to the general case of a Kawamata log terminal pair $(X,\Delta)$, as above, and $L$ an ample line bundle on $X$, here we consider a form of \emph{arithmetic uniform $\K$-stability}.  In more precise terms, fix a finite set of places $S$ and for each $v \in S$, let $E_v$ be a divisorial valuation over $X$, with field of definition some finite extension field $\FF / \KK$ with the property that $\KK \subseteq \FF \subseteq \overline{\KK}$.  

Then, here, we say that $(X,\Delta)$ is \emph{not arithmetically $\K$-stable} with respect to $L$ and $E_v$, for $v \in S$, if
$$
1 < \sum_{v \in S} A(E_v,X,\Delta) < \sum_{v \in S} \beta_{E_v}(L)R_v
$$
for some positive constants $R_v$.  We say that such positive constants $R_v$, for $v \in S$, are \emph{arithmetically $\K$-destabilizing Roth constants}.

Especially, such considerations motivate our formulation of Corollary \ref{K:stab:delta:invariant:cor} below.

\begin{corollary}\label{K:stab:delta:invariant:cor}
Fix a finite set of places $S \subseteq M_{\KK}$ of the base number field $\KK$.  Let $(X,\Delta)$ be a Kawamata log terminal pair as above.  Let $L$ be an ample line bundle on $X$.  
Fix a collection of divisorial valuations $E_v$, for $v \in S$, and having field of definition defined over some finite extension field of $\KK$.  

If $(X,\Delta)$ is not arithmetically $\K$-stable with respect to $L$ and $E_v$, for $v \in S$, and if $R_v$, for $v \in S$, are arithmetically destabilizing Roth constants, then
there exists a proper Zariski closed subset 
$$W \subsetneq X$$ 
so that the inequalities
$$
\alpha_{E_v}(\{x_i\},L) \geq  1/R_v
$$
are valid for all infinite sequences of distinct $\KK$-rational points 
$$
\{x_i\} \subseteq X(\KK) \setminus W(\KK)
$$
and at least one place $v \in S$.
\end{corollary}

\begin{proof}[Proof of Theorem \ref{Roth:constant:theorem} and Corollary \ref{K:stab:delta:invariant:cor}]
Recall, that the arithmetically $\K$-destabilizing Roth constants $R_v$, for $v \in S$,  satisfy the inequality that
$$
1 < \sum_{v \in S} A(E_v,X,\Delta) < \sum_{v \in S} \beta_{E_v}(L)R_v \text{.}
$$
The conclusion of Theorem \ref{Roth:constant:theorem} and Corollary \ref{K:stab:delta:invariant:cor} thus follows from Theorem \ref{Roth:Constants:Thm}.
\end{proof}

\end{document}